 \tikzstyle{block} = [draw, fill=white, rectangle, minimum height=0em, minimum width=0em]
 \tikzstyle{output} = [coordinate]
 \tikzstyle{input} = [coordinate]
 \pgfplotsset{compat=1.18}
 \newcommand{\setdef}[2]{\left\{\, #1\, \left|\, \vphantom{#1} #2 \right.\right\}}
 \newcommand{\ddt}{\tfrac{\text{\normalfont d}}{\text{\normalfont d}t}}
 \newcommand{\ds}{\text{\normalfont d}s}
\newcommand{\R}{\mathbb{R}}
\newcommand{\C}{\mathbb{C}}
\newcommand{\N}{\mathbb{N}}
\newcommand{\cB}{\mathcal{B}}
\newcommand{\cC}{\mathcal{C}}
\newcommand{\cD}{\mathcal{D}}
\newcommand{\cF}{\mathcal{F}}
\newcommand{\cL}{\mathcal{L}}
\newcommand{\cW}{\mathcal{W}}
\newcommand{\vp}{\varphi}
\newcommand{\ve}{\varepsilon}
\newcommand{\rp}{\mathbb{R}_{\geq 0}}
\DeclareMathOperator{\Gl}{\mathbf{Gl}}
\DeclareMathOperator{\esssup}{\rm ess\,sup}
 \newtheorem{Theorem}{Theorem}[section]
 \newtheorem{Lemma}{Lemma}
 \newtheorem{Corollary}[Theorem]{Corollary}
 \newtheorem{Remark}[Theorem]{Remark}
 \newtheorem{Assumption}{Assumption}
\newtheorem{defn}{Definition}
\crefname{Assumption}{assumption}{assumptions}
\crefname{defn}{definition}{definitions}
\begin{document}
\title{Funnel control of linear systems with arbitrary relative degree under output measurement losses}
\author{ {\sc Thomas Berger}\\[2pt]
Institut f\"ur Mathematik, Universit\"at Paderborn, Warburger Str.~100, 33098~Paderborn, Germany, {\tt thomas.berger@math.upb.de}\\[6pt]
{\sc Lukas Lanza}\\[2pt]
Optimization-based Control Group,
Institute of Mathematics, Technische Universit\"at Ilmenau,
                          Weimarer Stra{\ss}e~25, 98693 Ilmenau,
            Germany, {\tt lukas.lanza@tu-ilmenau.de}}
\pagestyle{headings}

\maketitle

\begin{abstract}
{{
We consider tracking control of linear minimum phase systems with known arbitrary relative degree which are subject to possible output measurement losses.
We provide a control law which guarantees the evolution of the tracking error within a (shifted) prescribed performance funnel whenever the output signal is available.
The result requires a maximal duration of measurement losses and a minimal time of measurement availability, which both strongly depend on the internal dynamics of the system, and are derived explicitly. The controller is illustrated by a simulation of a mass-on-car system.
}}

{linear systems, funnel control, output tracking, measurement losses, minimum phase}
\end{abstract}

%===============================================================================

\section{Introduction}
We study output reference tracking for linear minimum phase systems with arbitrary relative degree under possible output measurement losses. Such phenomena are of significant practical relevance whenever signals are transmitted over large distances or via digital communication networks and may hence be prone to signal losses or package dropouts. In the presence of output measurement losses the performance of closed-loop control strategies may seriously deteriorate and even lead to instability. In the present paper we present a reliable strategy for linear systems which is able to guarantee a prescribed margin for the tracking error and after a period of output measurement losses it is able to recapture the error within this time-varying margin by appropriately shifting it.

Output measurement losses are typically considered within the framework of networked control systems, see e.g.~\cite{GarcBarr07,WangYang10,ClooHete10,NesiTeel04}. Within this approach, event-triggered controllers have been designed in order to guarantee global asymptotic stability, see~\cite{LehmLunz12,BlinAllg14,LinsDima19} for linear systems and~\cite{WangLemm11,DolkHeem17} for nonlinear systems. $H_\infty$ control approaches have been considered in~\cite{GaoChen08,TangGao15} and model predictive control in~\cite{MunoChri08,LjesQuev18}.
Considering systems where the output consists of sampled data is related to output measurement losses, since between two samples no additional information is available.
Recently, in~\cite{lanza2023sampleddata} a controller was developed for continuous-time systems with sampled-data output, which 
 achieves guaranteed performance, if the sampling period is sufficiently small.
Note that in this situation, the system data is available at a priori known time instances.
However, in the case of unexpected measurement losses, as far as the authors are aware, tracking control with prescribed performance bounds for the tracking error has not yet been considered. To achieve this, in the present paper we use the methodology of funnel control.

The concept of funnel control goes back to the seminal work~\cite{IlchRyan02b}, see also the survey in~\cite{BergIlch21}.  The funnel controller proved to be the appropriate tool for tracking problems in various applications such as control of industrial servo-systems~\cite{Hack17} and underactuated multibody systems~\cite{BergDrue21,BergOtto19}, control of electrical circuits~\cite{BergReis14a,SenfPaug14}, control of peak inspiratory pressure~\cite{PompWeye15}, adaptive cruise control~\cite{BergRaue18,BergRaue20} and even the control of infinite-dimensional systems such as a boundary controlled heat equation~\cite{ReisSeli15b}, a moving water tank~\cite{BergPuch22} and defibrillation processes of the human heart~\cite{BergBrei21}.

The novel funnel control design that we present in this paper relies on an intrinsic ``availability function'' which encodes (as a binary value) whether the output measurement is available at some time instant, or if the measurement is lost. 
As a consequence, no precise \textit{a priori} information about the time instants where the measurement is lost or recaptured is necessary. Then the basic idea for the control design is to employ a classical funnel controller on each interval where the output is available, set the input to zero when it is not available and restart the controller when the output signal is received again. Because we restrict ourselves to linear systems no blow-up may occur when the input is zero. The crucial obstacle in the feasibility proof of the control design in our main result Theorem~\ref{Thm:FunnnelControl} is to show that the resulting control input in the closed-loop system is globally bounded. To this end, we require appropriate assumptions on the maximal duration of measurement losses and the minimal time of measurement availability, which we explicitly derive in \Cref{Sec:ControllerDesign}. The bound for these durations essentially depends on the internal dynamics of the system~-- if the internal dynamics are absent, no restrictions must be made. However, if they are present a key step is to find an invariant set for the internal dynamics and to choose the initial width of the performance funnel large enough~-- this is elaborated in \Cref{Sec:DesignParameters}. The control design is illustrated by a simulation of a mass-on-car system in \Cref{Sec:Sim}.

\paragraph{Nomenclature.}
Throughout the present article we use the following notation, where $I \subseteq \R$ denotes an interval and $\rp := [0, \infty)$.
%$\N$ is the set of positive integers;
$\C_- :=\setdef{ z \in \C}{ {\rm Re}\, z < 0}$;
$\| x \| := \sqrt{  x^\top x }$ is the Euclidean norm of  $x \in \R^n$;
$\Gl_n(\R)$ is the set of invertible matrices~$A \in \R^{n \times n}$;
for $A \in \Gl_n(\R)$ we write $A > 0$ ($A<0$) if $A$ is positive (negative) definite;
$\sigma(A) \subseteq \C$ is the spectrum of a matrix~$A \in \R^{n \times n}$;
%$\cL_{\rm loc}^\infty (I ; \R^p)$ the set of locally essentially bounded functions $ f: I \to \R^p$;
$\cL^\infty (I; \R^p)$ is the Lebesgue space of measurable and essentially bounded functions $ f: I \to \R^p$ with norm
$\| f \|_{\infty} := \esssup_{t \in I} \|f(t)\|$;
$\cW^{k,\infty}(I ; \R^p)$ is the Sobolev space of $k$-times weakly differentiable functions $f : I \to \R^p$ such that $f,\dot f,\ldots,f^{(k)} \in \cL^\infty(I ; \R^p)$;
$\cC^k( I ; \R^p) $ is the set of $k$-times continuously differentiable functions $f : I \to \R^p$, $\cC(I ; \R^p) = \cC^0(I ; \R^p)$;
$f|_{J}$ is the restriction of $f : I \to \R^n$ to $J \subseteq I$.

\section{Problem formulation and system class}
In this section we introduce the problem under consideration, and specify the system class to which our solution applies.
Before we provide the technical details of the controller in \Cref{Sec:FeedbackLaw}, we provide a brief description of the control objective.
The overall task is output reference tracking with predefined error performance in the case that the system output is subject to measurement dropouts.
Predefined error performance means that for a system 
\begin{equation} \label{eq:System-lin}
\begin{aligned}
\dot x(t) &= A x(t) + B u(t), \quad x(0) = x^0 \in \R^n, \\
y(t) & = C x(t),
\end{aligned}
\end{equation}
the output $y(t)$ follows a given reference signal $y_{\rm ref} \in \cW^{r,\infty}(\rp,\R^m)$ with the prescribed performance
\begin{equation} \label{eq:error_performance}
    \forall \, t \ge 0 \, : \ \| y(t) - y_{\rm ref}(t) \| < \psi(t) ,
\end{equation}
for a given (time-varying) boundary function~$\psi$. 
Measurement dropouts mean, that for some intervals of length at most~$\Delta > 0$ no output measurement is available, i.e., the signal $y|_{[t,\hat t]}$ is unknown, where $|\hat t - t| \le \Delta$.
The aim is to develop a controller, which achieves~\eqref{eq:error_performance} in those intervals, where measurements are available.
Moreover, if the signal is lost, the controller is able to ensure satisfaction of~\eqref{eq:error_performance} directly after reappearance of the measurement. 
At this time instance it may be required to widen the function~$\psi$ in order to recapture the tracking error within the performance funnel, since its evolution is unknown when no measurement is available.
For a rigorous problem statement, we first introduce the system class under consideration.
\subsection{System class}
We consider linear systems~\eqref{eq:System-lin},
where $y(t) \in \R^m$ is the output, and $u(t) \in \R^m$ is the input of the system at time~$t \ge 0$.
The dynamics of system~\eqref{eq:System-lin} are governed by matrices $A \in \R^{n \times n}$ and $B, C^\top\in \R^{n\times m}$.
Note that the dimension $m \in \N$ of output and input coincide.
We assume that the system has a well-defined strict relative degree.
\begin{Assumption} \label{Ass:rel_deg}
System~\eqref{eq:System-lin} has strict relative degree $r\in\N$, i.e., $CA^kB = 0$ for all $k=0,\ldots,r-2$, and  $\Gamma := CA^{r-1} B \in \Gl_m(\R)$.
    
\end{Assumption}
Invoking \Cref{Ass:rel_deg}, the result \cite[Lem.~3.5]{ilchmann2007tracking} yields that there exist $R_i \in \R^{m \times m}$, $i=1,\ldots,r$, 
$S,P^\top \in \R^{m \times (n-rm)}$,  $Q \in \R^{(n-rm) \times (n-rm)}$ and an invertible matrix $U \in \R^{n \times n}$ such that after the coordinate transformation $(y,\ldots,y^{(r-1)},\eta) = U x$ the dynamics of system~\eqref{eq:System-lin} can equivalently be written in the form
\begin{equation} \label{eq:System}
\begin{aligned}
y^{(r)}(t) & = \sum_{i=1}^{r} R_i y^{(i-1)}(t) + S \eta(t) + \Gamma u(t), \\
\dot \eta(t) & = Q \eta(t) + P y(t),
\end{aligned}
\end{equation}
with initial conditions
\begin{equation}\label{eq:IC}
\begin{aligned}
(y(0),\ldots,y^{(r-1)}(0) ) = (y_0^0,\ldots,y_{r-1}^0 ) \in \R^{rm}, 
\quad \eta(0) = \eta^0 \in \R^{n-rm}.
\end{aligned}
\end{equation}
The second equation in~\eqref{eq:System} describes the internal dynamics of~\eqref{eq:System-lin}.
The following assumption concerns the stability of the internal dynamics of system~\eqref{eq:System-lin}, or rather system~\eqref{eq:System}, which are present if $rm < n$.
\begin{Assumption} \label{Ass:Q}
      For given numbers $M, \mu \ge 0$, the matrix $Q$ in~\eqref{eq:System} is Hurwitz, i.e., $\sigma(Q) \subset \C_-$, and satisfies 
    \begin{equation} \label{eq:Q_estimate}
        \forall \, t \ge 0 :\ \| e^{Q t} \| \le M e^{-\mu t} .
    \end{equation}
\end{Assumption}
Asking the matrix~$Q$ to be Hurwitz means assuming the internal dynamics to be stable, i.e., the system is minimum phase.
Note that if $Q$ is a given Hurwitz matrix, \eqref{eq:Q_estimate} is satisfied with $M := \sqrt{\|K^{-1}\|\|K\|}$ and $\mu := 1/(2\|K\|)$, where $K$ is a solution of the Lyapunov equation $KQ + Q^\top K = - I_{n-m}$, cf.~\cite{Godu98}.
For systems with trivial internal dynamics we set $M := 0$ and $\mu := 1$. %, which satisfies~\eqref{eq:Q_estimate}.
The next assumption is also related to the internal dynamics. The numbers $s,p \ge 0$ quantify the influence of the internal dynamics on the system dynamics.

\begin{Assumption} \label{Ass:SP}
    For given numbers~$s,p \ge 0$, the matrices $S,P$ in~\eqref{eq:System} satisfy
    \begin{align*}
        \|S\| & \le s, \quad \|P\| \le p.
    \end{align*}
\end{Assumption}
The following assumption concerns the (external) dynamics of system~\eqref{eq:System}, i.e., the matrices ${R_i \in \R^{m \times m}}$ in~\eqref{eq:System}, $i=1,\ldots,r$.
\begin{Assumption} \label{Ass:R}
For a given number~$\beta \ge 0$, and numbers~$s,p,M,\mu \ge 0$ from \Cref{Ass:Q,Ass:SP} the matrices $R_i$ in~\eqref{eq:System} satisfy
    \begin{equation}
          \sum_{i=1}^r \|R_i\|  \le \beta -  \frac{s p M - \mu}{\mu} .
    \end{equation}
\end{Assumption}
Due to the parameterization via the constants in \Crefrange{Ass:Q}{Ass:R}, we may define the following class of systems.
\begin{defn} \label{Def:SystemClass}
    For $m,r \in \N$
    a system~\eqref{eq:System} is said to belong to the class $\Sigma_{m,r}$, if \Crefrange{Ass:rel_deg}{Ass:R} are satisfied, and the symmetric part of $\Gamma = CA^{r-1}B$ is sign definite\footnote{That is, for any $v\in\R^m$ we have $v^\top (\Gamma + \Gamma^\top) v = 0$ if, and only if, $v = 0$.}; w.l.o.g. we assume $\Gamma + \Gamma^\top> 0$.
    In virtue of the equivalence of~\eqref{eq:System-lin} and~\eqref{eq:System}, we write $(A,B,C) \in \Sigma_{m,r}$.
\end{defn}

We like to note that, actually, the constants $s,p,M,\mu$ and $\beta$ in \Crefrange{Ass:Q}{Ass:R} parameterise the system class $\Sigma_{m,r}$, and hence the latter depends on the choice of these constants. 
For better readability we do not indicate this dependence explicitly.
However, it is important to note that \Crefrange{Ass:Q}{Ass:R} do not restrict the system class more than assuming it to have well defined strict relative degree and being minimum-phase.
Further note that we may also allow for $\Gamma^\top + \Gamma < 0$ by simply changing the sign in the feedback law~\eqref{def:control-scheme} defined below.

\begin{Remark}
The class of systems with strict relative degree one and trivial internal dynamics,
\begin{equation*}
    \dot x(t) = A x(t) + B u(t), \quad x(0) = x^0 \in \R^n
\end{equation*}
with $B \in \Gl_n(\R)$, and output $y(t) = x(t)$,
is solely parameterised by the number~$\beta$
 since $M=s=p=0$ and $\mu = 1$.   
\end{Remark}
\subsection{Control objective} \label{Sec:ControlObjective}
We aim to find a control scheme which achieves tracking of a given reference trajectory with prescribed transient behavior of the error, where the measurement output is subject to dropouts.
To be more precise, for a system~\eqref{eq:System} with $(A,B,C) \in \Sigma_{m,r}$, and a given reference signal $y_{\rm ref} \in \cW^{r,\infty}(\rp; \R^m)$ the output~$y$ tracks the reference in the sense that, whenever the measurement of~$y$ is available to the controller, the error~$e := y - y_{\rm ref}$ evolves within a prescribed \textit{performance funnel}
\begin{equation*}
 \cF_\vp := \setdef{ (t,e) \in \rp \times \R^m}{ \vp(t)\|e\| < 1 } ,
 \end{equation*}
where~$\vp$ determines the funnel boundary $\psi := 1/\varphi$, and belongs to the following set of monotonically increasing functions
\begin{equation*}
\Phi := \setdef{ \phi \in \cC^1(\rp; \R)}{\!
\begin{array}{l}
\forall\, t_2 \ge t_1 \ge 0: \
0 < \phi(t_1) \le\phi(t_2), \\
\exists \, d> 0 \ \forall\, t\ge 0: \
| \dot \phi(t) | \le d(1+\phi(t))
\end{array} \!}.
\end{equation*}
The performance funnel $\cF_\vp$ joins the two objectives of $e(t)$ approaching zero with prescribed transient behaviour and asymptotic accuracy. Its boundary is given by the reciprocal of $\vp$, see also \Cref{Fig:Schematic-funnel}. 

\begin{Remark}
We stress that $\vp$ may be unbounded, and in this case (and if no measurement losses occur for $t\ge T$ for some $T>0$) asymptotic tracking may be achieved, i.e., $\lim_{t\to\infty} e(t) = 0$.    
\end{Remark}

\section{Controller design} \label{Sec:ControllerDesign}
In this section we propose a novel control scheme, which achieves the control objective formulated in \Cref{Sec:ControlObjective} for any member of the system class~$\Sigma_{m,r}$.
We consider situations where the output measurement signal may be lost for some time,
and propose assumptions relating the maximal duration of measurement losses and minimal time of measurement availability.
The package dropouts in the system and the accompanying lost information of the measurements~$y(t)$ are not assumed to happen in \textit{a priori} known time intervals. We only assume that it is possible to determine, at every time instant~$t$, whether the measurement of~$y(t)$ is available or not; if the availability is not certain, then it should be rendered ``unavailable'' (this also encompasses the situation that, after a dropout, the availability of the measurement is only determined with some delay). Based on this we define an ``availability function''
\begin{equation}\label{eq:sensor}
a(t) = \begin{cases}
1, & \text{measurement of $y(t)$ available}, \\
0, & \text{measurement of $y(t)$ not available}.
\end{cases}
\end{equation}

\subsection{Availability and loss of measurement} \label{Sec:AvailableAndLoss}
In order to introduce the assumptions on the maximal duration of measurement losses and the minimal time of measurement availability we define the sequences $(t_k^-)$, $(t_k^+)$ with $t_k^\pm \nearrow\infty$ and $t_k^-< t_k^+ < t_{k+1}^- < t_{k+1}^+$ such that
\begin{equation} \label{eq:intervals}
\begin{aligned}
        \setdef{t\ge 0}{a(t) = 1} = \bigcup_{k\in\N} (t_k^+, t_{k+1}^-], \quad
    \setdef{t\ge 0}{a(t) = 0} = \bigcup_{k\in\N} (t_k^-, t_k^+],
\end{aligned}
\end{equation}
this is, on the interval $(t_k^+,t_{k+1}^-]$ the signal is available, and on the interval $(t_k^-,t_{k}^+]$ the signal is not available.
Note that it is also possible that both sequences contain only finitely many points, then either $a(t) =1$ for $t\ge t_N^+$ or $a(t)=0$ for $t\ge t_N^-$ for some $N\in\N$.

Furthermore, we require the following constants.
Choose~$q \in (0,1)$, and for $k\ge 0$ define the function
\begin{subequations} \label{eq:Ak_Ar}
\begin{equation} \label{eq:Ak}
A_k(s) = \sum_{j=0}^k s^j.
\end{equation}
Then, with $\alpha(s) := 1/(1-s)$ fix the number 
\begin{equation} \label{eq:Ar}
A_r := A_r(\alpha(q^2)) > 0.   
\end{equation}
\end{subequations}
Now, we introduce the assumptions on the maximal duration of measurement losses.
\begin{Assumption} \label{ass:signal-lost}
Choose parameters $M,\mu,s,p,\beta$ for \Crefrange{Ass:Q}{Ass:R} and consider a system $(A,B,C) \in \Sigma_{m,r}$, satisfying these assumptions.
Let $q \in (0,1)$, and $A_r$ be given by~\eqref{eq:Ar}.
The measurement signal is lost for at most~$\Delta >0$, i.e., for $t_k^\pm$ as in~\eqref{eq:intervals} we have $|t_k^- - t_k^+| \le \Delta$ for all $k \in \N$, such that 
$\Delta$ satisfies
\begin{align}
         {spM  \Delta^2} e^{\beta \Delta} &\le 1 \label{eq:Delta_1} \tag{$\Delta_1$} \\
        {spM^2  \Delta^2} e^{\beta \Delta} &<  {\frac{q}{A_r}} \label{eq:Delta_2} \tag{$\Delta_2$} \\
       {2\mu M \Delta} & < 1 \label{eq:Delta_3} \tag{$\Delta_3$}.
\end{align}
\end{Assumption}
The next assumption concerns the minimal time of measurement availability.
\begin{Assumption} \label{ass:signal-available}
{Choose parameters $M,\mu,s,p,\beta$ for \Crefrange{Ass:Q}{Ass:R} and consider a system $(A,B,C) \in \Sigma_{m,r}$, satisfying these assumptions.}
Let $q \in (0,1)$, {$A_r$ as in~\eqref{eq:Ar} and~$\Delta$ as in \Cref{ass:signal-lost}}.
The measurement signal is available for at least~$\delta>0$, i.e., for $t_k^\pm$ as in~\eqref{eq:intervals} we have $|t_{k}^+ - t_{k+1}^-| \ge \delta$ for all $k \in \N$, such that~$\delta$ satisfies
\begin{align}
        e^{\mu \delta} &\ge {\frac{4 M^2 + pM \Delta}{1-\mu M \Delta}},  \label{eq:delta_1} \tag{$\delta_1$}  \\
        e^{\mu \delta} & \ge {\frac{2spM^3 A_r \Delta e^{\beta\Delta}}{\mu q - \mu sp M^2 A_r \Delta^2 e^{\beta \Delta}}},   \label{eq:delta_2} \tag{$\delta_2$}
\end{align}
which can be satisfied because of~\eqref{eq:Delta_2} and~\eqref{eq:Delta_3}.
\end{Assumption}

\begin{Remark}\label{Rem:int-dyn}
For systems with trivial internal dynamics (the second equation in~\eqref{eq:System} is not present), \Cref{ass:signal-lost,ass:signal-available} are much weaker.
In this case we have $p=0$, $s=0$ and $M=0$ with which the inequalities~\eqref{eq:Delta_1},~\eqref{eq:Delta_2},~\eqref{eq:Delta_3} and~\eqref{eq:delta_1},~\eqref{eq:delta_2} are always satisfied, 
and hence arbitrary $\Delta>0$ and $\delta>0$ are possible so that $|t_k^- - t_k^+|\le \Delta$, and $|t_k^+ - t_{k+1}^-|\ge \delta$ for all $k\in\N$. So the only (implicit) requirement is that the sequence $(|t_k^- - t_k^+|)$ is bounded.
\end{Remark}

\subsection{{Choice of funnel boundary}} \label{Sec:DesignParameters}
In order to formulate the control law, which achieves the control objective formulated in \Cref{Sec:ControlObjective}, we introduce a funnel boundary function $\varphi_0\in\Phi$, which is defined by the following five consecutive steps.
One step is already done in~\eqref{eq:Ak_Ar}, but for the sake of completeness we restate it here.
%%%%%%%%%%%%%%%%%%%%%%%%%%%%%
In the flowchart \Cref{Fig:Parameters} the five steps towards the choice of $\vp_0 \in \Phi$ are depicted.

\begin{enumerate}[label=\emph{Step~\arabic{enumi}}., leftmargin=*]
    \item Choose $q \in (0,1)$, and set $A_r := A_r(\alpha(q^2))$ according to~\eqref{eq:Ak_Ar}.
    \item For the constants $M,\mu,p,s,\beta,\Delta,\delta$ from \Crefrange{Ass:Q}{ass:signal-available}, and
$x_{\rm ref}(\cdot) := (y_{\rm ref}(\cdot),\dot y_{\rm ref}(\cdot), \ldots, y_{\rm ref}^{(r-1)}(\cdot))$, choose $\eta^*>0$ with
\begin{subequations} \label{eq:eta-star}
\begin{align}
\eta^* & \ge {p \Delta e^{\mu\delta} \| y_{\rm ref}\|_\infty},  \label{eq:eta_star_ref} \\
    \eta^* & \ge \left(\|x_{\rm ref}\|_\infty + 1  \right) e^{\beta \Delta + \mu \delta} , \label{eq:eta_star_refall} \\
        \eta^* & \ge {\frac{pMA_r\big(\|x_{\rm ref}\|_\infty \left(1+ e^{\beta \Delta} \right) + e^{\beta \Delta}\big)}{\mu q - spM^2A_r\Delta e^{\beta\Delta}(\mu\Delta + 2Me^{-\mu\delta})}} \label{eq:eta_star_vp1}
\end{align}
\end{subequations}
where~\eqref{eq:eta_star_vp1} can be satisfied because of~\eqref{eq:delta_2}.
\item 
Let~$\vp_0 \in \Phi$ such that 
for $ E := \|x_{\rm ref}\|_\infty \left(1+ e^{\beta \Delta} \right) + e^{\beta \Delta} + sM \Delta e^{\beta \Delta} \left( 2 M e^{-\mu \delta} + {\mu\Delta} \right) \eta^*$ we have
\begin{align}
\vp_{0,\rm min} := \frac{ p M}{\mu \eta^*} & \le \vp_0(0) \le \frac{q}{A_r E} =: \vp_{0,\rm max},
 \label{eq:vp1} \tag{$\phi_1$}
\end{align}
which is possible by~\eqref{eq:eta_star_vp1}.
\item 
To exploit~\cite[Cor.~1.10]{BergIlch21}, we require the following constants.
Let $ \hat \alpha^\dagger(z) := z/(1+z)$ , and observe that $\hat \alpha^\dagger ( s \alpha(s)) = s$.
Let $\tilde \alpha(s) := 2s \alpha'(s) + \alpha(s) = (1+s)/(1-s)^2$.
Set $\mu_0 := \frac{d(1+\vp_0(0))}{\vp_0(0)}$ where $d>0$ is due to properties of~$\Phi$, and observe that $\esssup_{t \ge 0} (|\dot \vp_0(t)|/\vp_0(t)) \le \mu_0$; here we use this possibly larger constant~$\mu_0$ to guarantee that it only depends on the initial value~$\vp_0(0)$.
Then, in virtue of~\cite[Eq.~(12)]{BergIlch21}, for $k=1,\ldots,r-1$ we recursively define the constants $c_0 = 0$ and
\begin{equation} \label{eq:ci}
\begin{aligned}
e_1^0 & := \vp_0(0) e(0), \\
c_1 &:= \max \{ \|e_1^0\|^2, \hat \alpha^\dagger(1+\mu_0), q^2 \}^{1/2} < 1, \\
\mu_k & := 1 + \mu_0 \big( 1+c_{k-1} \alpha(c_{k-1}^2) \big)  + \tilde \alpha(c_{k-1}^2) \big( \mu_{k-1} + c_{k-1} \alpha(c_{k-1}^2) \big), \\
e_k^0 & := \vp_0(0) e^{(k-1)}(0) + \alpha(\|e_{k-1}^0\|^2) e_{k-1}^0, \\
c_k & := \max \{ \|e_k^0\|^2, \hat \alpha^\dagger(\mu_k) ,q^2 \}^{1/2} < 1,
\end{aligned}
\end{equation}
where $e^{(i)}(0) = y_i^0 - y_{\rm ref}^{(i)}(0)$ for $i=0,\ldots,r-1$.
Then we set
\begin{equation} \label{eq:C}
\chi := \sum_{i=1}^{r-1} c_{i} + c_{i-1} \alpha(c_{i-1}^2) + (1+c_{r-1} \alpha(c_{r-1}^2)).
\end{equation}
\item 
We refine the funnel function~$\vp_0 \in \Phi$ satisfying~\eqref{eq:vp1} such that for an intermediate $\rho \in (0,\delta)$
\begin{equation}
\vp_0(\rho) \ge \chi \label{eq:vp2} \tag{$\phi_2$} .
\end{equation}
\end{enumerate}

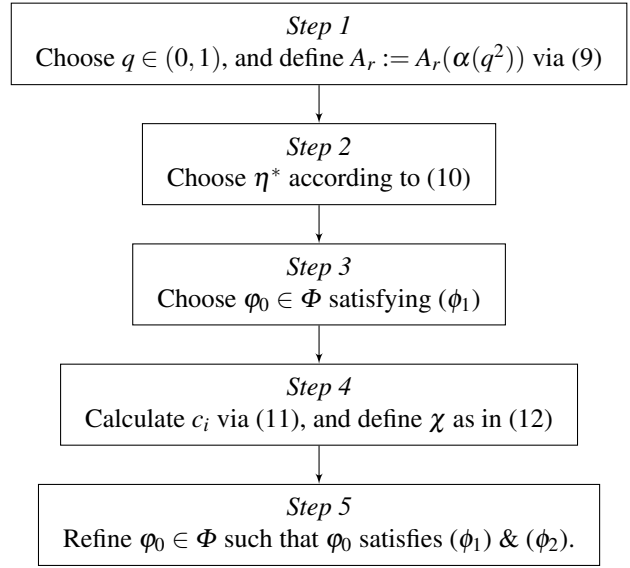
\begin{figure}[h]
\centering
\begin{tikzpicture}[auto, node distance=2cm,>=latex']
\def\hoch{0.8cm};
\def\breit{0cm};
\def\dist{1.6cm};
\node [block, minimum width = \breit, minimum height = \hoch] (q) { \begin{tabular}{c}
\emph{Step~1} \\
Choose~$q \in (0,1)$, and define $A_r:=A_r(\alpha(q^2))$ via~\eqref{eq:Ak_Ar}
\end{tabular} } ;
\node [block, minimum width = \breit, minimum height = \hoch,below of=q, node distance=\dist] (eta) { \begin{tabular}{c}
\emph{Step~2} \\
{Choose~$\eta^*$ according to~\eqref{eq:eta-star}}
\end{tabular} };
\node [block, minimum width = \breit, minimum height = \hoch,below of=eta, node distance=\dist] (vp1) { \begin{tabular}{c}
\emph{Step~3} \\
{Choose~$\vp_0 \in \Phi$ satisfying~\eqref{eq:vp1}}
\end{tabular} };
\node [block, minimum width = \breit, minimum height = \hoch,below of=vp1, node distance=\dist] (C) { \begin{tabular}{c}
\emph{Step~4} \\
Calculate~$c_i$ via~\eqref{eq:ci}, and define~$\chi$ as in~\eqref{eq:C}
\end{tabular} } ;
\node [block, minimum width = \breit, minimum height = \hoch,below of=C, node distance=\dist] (vp2) { \begin{tabular}{c}
\emph{Step~5} \\
{Refine~$\vp_0 \in \Phi$ such that $\vp_0$ satisfies~\eqref{eq:vp1}~\&~\eqref{eq:vp2}.}
\end{tabular} };
\draw[->] (q) -- (eta);
\draw[->] (eta) -- (vp1);
\draw[->] (vp1) -- (C);
\draw[->] (C) -- (vp2);

\end{tikzpicture}
\caption{Flowchart for the choice of the controller design parameters~$\eta^* \in \R$ and~$\vp_0 \in \Phi$.}
\label{Fig:Parameters}
\end{figure}

\begin{Remark}
The purpose of the constants~$q, \eta^*$ chosen in Step~1 and Step~2 of the design procedure is to determine the initial width of the performance funnel, described by the upper bound for $\vp_0(0)$ in~\eqref{eq:vp1}. Then again, condition~\eqref{eq:vp2} ensures that its width (and hence the tracking error) is not too large before the signal possibly vanishes the next time.
Note that if no internal dynamics are present, the minimal initial width of the funnel is solely determined by the reference signal and the duration of the unavailability of output measurements.
\end{Remark}

\subsection{Feedback law} \label{Sec:FeedbackLaw}
With the assumptions and definitions provided in \Cref{Sec:AvailableAndLoss,Sec:DesignParameters} we are now in the position to introduce the feedback law, which achieves the control objective defined in \Cref{Sec:ControlObjective}.
The idea for the controller design is to choose a funnel function
$\vp_0 \in \Phi$ (as in the previous subsection)
which is reset whenever $a(t)=0$.
Then, as soon as $a(t^*)=1$ for some $t^*\ge 0$ and the measurement is available again, the funnel controller from~\cite{BergIlch21} is restarted with
$\vp(t) = \vp_0(t-t^*$)
so that $\vp(t^*)>0$ and the performance funnel is sufficiently large at $t^*$ to ensure applicability of~\cite[Thm.~1.9]{BergIlch21}.
For feasibility we assume that the availability function $a(\cdot)$ from~\eqref{eq:sensor} is left-continuous and has only finitely many jumps in each compact interval.
With this, and recalling $\alpha(s) = 1/(1-s)$, we introduce the following control law for systems~\eqref{eq:System} under possible output measurement losses:
\begin{equation}\label{def:control-scheme}
\boxed{
\begin{aligned}
    \tau(t) &= \begin{cases} t, & a(t) = 0,\\ \tau(t-), & a(t) = 1,\end{cases} \\
    \vp(t) &= \begin{cases} 0, & a(t) = 0,\\ \vp_{0}(t-\tau(t)), & a(t) = 1,\end{cases}\\
    e_1(t) &= \vp(t) e(t) = \vp(t)\big(y(t) - y_{\rm ref}(t)\big),\\
    e_{i+1}(t) &= \vp(t) e^{(i)}(t) \!  + \!  \alpha( \|e_i(t)\|^2 ) e_i(t) , \ i=1,\ldots,r-1,\\
    u(t) &= - a(t) \alpha(\|e_r(t)\|^2)  e_r(t).
\end{aligned}
}
\end{equation}
%\end{small}
With $\tau(t-)$ we denote the left limit $\tau(t-) = \lim_{h\searrow 0} \tau(t-h)$  of the piecewise continuous function $\tau$ at~$t$. This ensures that $\tau$ is constant on any interval where $a(t)=1$ (i.e., the measurements are available), and so the necessary time shift of~$\varphi_0$ does not increase further. Note that if $\Gamma^\top + \Gamma < 0$ (instead of $\Gamma^\top + \Gamma > 0$ as in \Cref{Def:SystemClass}), then we may simply change the sign in the control and obtain $u(t) = a(t) \alpha(\|e_r(t)\|^2)  e_r(t)$.

If the output measurement is always available, i.e., $a(t)=1$ for all $t\ge 0$, then the controller~\eqref{def:control-scheme} coincides with that proposed in~\cite{BergIlch21} and the existence of a global solution of the closed-loop system follows from the results presented there.
Since it is not known \textit{a priori} when output measurement losses occur, the funnel function $\vp$ cannot be globally defined in advance.
Therefore,~$\vp$ is defined online as part of the control law~\eqref{def:control-scheme}; it is equal to a shifted version of the reference funnel function~$\vp_{0}$
whenever measurements are available, and zero otherwise.
Note that the loss of the system's output signal possibly introduces a discontinuity in the control signal. 
A typical choice for a funnel function is $\vp_0(t) = (a e^{-bt} + c)^{-1}$ with $a,b,c > 0$, which is depicted in \Cref{Fig:Schematic-funnel}.
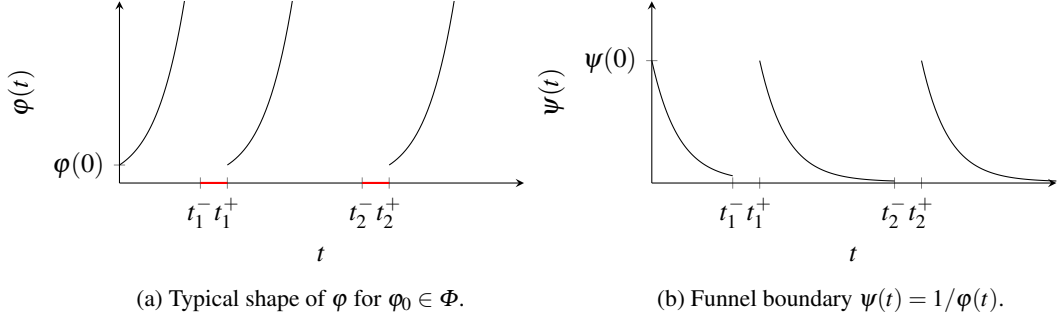
\begin{figure}
\centering
\begin{subfigure}[b]{0.48\linewidth}
\begin{tikzpicture}
% \tikzmath{
% % Zeitpunkte und Funktionsparameter
% \t1 = 3;
% \t2 = 4;
% \t3 = 9;
% \t4 = 10;
% \t5 = 15;
% \A = 1;
% \B = 1;
% \D = 0.01;
% \VP = 1/(\A+\D);
% }
\begin{axis}[axis lines = left, xlabel = \(t\), ylabel near ticks, ylabel = {\(\varphi(t)\)},
xmin=0, xmax=15,
ymin=0, ymax=10,
xtick={3,4,9,10},
xticklabels={$t_1^-$,$t_1^+$,$t_2^-$,$t_2^+$},
ytick={0.99},
yticklabel={$\vp(0)$},
height = 4cm,
width = \columnwidth,
]
\addplot[domain=0:3,samples=50,color=black] {1/(1*exp(-1*x) + 0.01)} ;
\addplot[domain=3:4,samples=50,color=red, line width=0.3mm] {0} ;
\addplot[domain=4:9,samples=50,color=black] {1/(1*exp(-1*(x-4)) + 0.01)} ;
\addplot[domain=9:10,samples=50,color=red, line width=0.3mm] {0} ;
\addplot[domain=10:15,samples=50,color=black] {1/(1*exp(-1*(x-10)) + 0.01)} ;
\end{axis}
\end{tikzpicture}
\caption{Typical shape of~$\vp$ for $\vp_0 \in \Phi$.}
\label{Fig:Overall-funnel}
\end{subfigure}
\begin{subfigure}[b]{0.48\linewidth}
\begin{tikzpicture}
%\tikzmath{
% Zeitpunkte
% \t1 = 3;
% \t2 = 4;
% \t3 = 9;
% \t4 = 10;
% \t5 = 15;
% \A = 1;
% \B = 1;
% \D = 0.1;
% \PSI = \A+\D;
% }
\begin{axis}[axis lines = left, xlabel = \(t\), ylabel near ticks, ylabel = {\( \psi(t)\)},
xmin=0, xmax=15,
ymin=0, ymax=1.5,
xtick={3,4,9,10},
xticklabels={$t_1^-$,$t_1^+$,$t_2^-$,$t_2^+$},
ytick = 1.01,
yticklabel = {$\psi(0)$},
height = 4cm,
width = \columnwidth,
]
\addplot[domain=0:3,samples=50,color=black] {1*exp(-1*x) + 0.01} ;
\addplot[domain=4:9,samples=50,color=black] {1*exp(-1*(x-4)) + 0.01)} ;
\addplot[domain=10:15,samples=50,color=black] {1*exp(-1*(x-10)) + 0.01} ;
\end{axis}
\end{tikzpicture}
\caption{Funnel boundary $\psi(t) = 1/\vp(t)$.}
\label{Fig:Overall-funnel-boundary}
\end{subfigure}
\caption{Schematic shape of a typical funnel boundary with shifts.}
\label{Fig:Schematic-funnel}
\end{figure}
\section{Main result}
Now we are in the position to formulate our main result.
To phrase it, the application of the controller~\eqref{def:control-scheme} to a system~\eqref{eq:System-lin} (or equivalently a system~\eqref{eq:System}) with $(A,B,C) \in \Sigma_{m,r}$
under possible output measurement losses leads to a closed-loop initial-value problem which has a global solution. 
By a solution of~\eqref{eq:System},~\eqref{def:control-scheme} on $[0,\omega)$ we mean a function $(y,\eta)\in \cC^{r-1}([0,\omega),\R^m) \times \cC([0,\omega),\R^{n-rm})$ with $\omega\in (0,\infty]$, which satisfies the initial conditions~\eqref{eq:IC} and $(y^{(r-1)}, \eta)|_{[0,\omega)}$ is locally absolutely continuous and satisfies the differential equation in~\eqref{eq:System} with~$u$ defined by~\eqref{def:control-scheme} for almost all $t\in[0,\omega)$.
The solution $(y,\eta)$ is called maximal, if it has no right extension that is also a solution.

\begin{Theorem} \label{Thm:FunnnelControl}
Choose parameters $M,\mu,s,p,\beta$ for \Crefrange{Ass:Q}{Ass:R} and consider a system~\eqref{eq:System} with $(A,B,C) \in \Sigma_{m,r}$, satisfying these assumptions.
Let $y_{\rm ref} \in \cW^{r,\infty}(\rp; \R^m)$ be a given reference, {initial values as in~\eqref{eq:IC},}
$a(\cdot)$ be an availability function as in~\eqref{eq:sensor}, which is left-continuous and has only finitely many jumps in each compact interval, and choose design parameters $\eta^*$ as in~\eqref{eq:eta-star}, and $\vp_{0}\in\Phi$ satisfying~\eqref{eq:vp1},~\eqref{eq:vp2}.
If the initial conditions
\begin{subequations} \label{eq:initials}
 \begin{align}
\forall\, i = 1,\ldots,r \, : \ \| e_i(0)\| & < 1, \label{eq:initial_ei} \\
 \|\eta^0\| &\le \eta^* \label{eq:initial_eta}
 \end{align}
 \end{subequations}
are satisfied,
then the control scheme~\eqref{def:control-scheme} applied to system~\eqref{eq:System} yields an initial-value problem which has a solution,
every solution can be extended to a maximal solution and every maximal solution $(y,\eta): [0,\omega) \to \R^m\times\R^{n-rm}$ has the following properties:
\begin{enumerate}[label = (\roman{enumi}), ref=(\roman{enumi}),leftmargin=*]
\item \label{ome_inf}
the solution is global, i.e., $\omega = \infty$,
\item \label{error_funnel}
 the tracking error~$e(t) = y(t) - y_{\rm ref}(t)$ evolves within the funnel boundaries, i.e., $\varphi(t) \|e(t)\| < 1$ for all $t\ge 0$,
\item \label{bounded_u}
the control signal is globally bounded, i.e., $u \in \cL^\infty(\rp; \R^m)$; moreover, $y \in \cW^{r,\infty}(\rp; \R^m)$.
\end{enumerate}
\end{Theorem}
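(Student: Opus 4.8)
The plan is to build the maximal solution by concatenating solutions on the availability intervals $(t_k^+,t_{k+1}^-]$ and the loss intervals $(t_k^-,t_k^+]$ from \eqref{eq:intervals} (which is well posed since $a(\cdot)$ has finitely many jumps on compact sets), and to propagate two invariants across the concatenation: first, that the internal state, evaluated at the breakpoints $t_k^\pm$, stays bounded by $\eta^*$ with a controlled excursion in between; and second, that at the start of every availability interval the gain-scaled errors satisfy the restart condition $\|e_i(t_k^+)\|<1$, $i=1,\dots,r$. On a loss interval the control is $u\equiv 0$, so \eqref{eq:System} reduces to a linear initial value problem whose solution exists and extends to the end of the interval by standard linear theory; on an availability interval the law \eqref{def:control-scheme} is exactly the funnel controller of \cite{BergIlch21}, restarted with the shifted boundary $\vp(t)=\vp_0(t-t_k^+)$, so whenever the restart condition holds the results \cite[Thm.~1.9, Cor.~1.10]{BergIlch21} produce a solution on the whole interval that stays strictly inside the funnel. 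Concatenation gives a solution on $[0,\omega)$; the content of the theorem is that the invariants persist for every $k$, which simultaneously forces $\omega=\infty$ and the global bounds.

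On an availability interval, assuming the restart condition, \cite[Cor.~1.10]{BergIlch21} gives uniform bounds $\|e_i(t)\|\le c_i<1$ with the $c_i$ from \eqref{eq:ci}; this already yields \ref{error_funnel} on such intervals and, via $u=-a\,\alpha(\|e_r\|^2)e_r$, bounds the control there by $\alpha(c_r^2)c_r$. Inverting the relations $e_{i+1}=\vp e^{(i)}+\alpha(\|e_i\|^2)e_i$ expresses $\vp(t)\|y^{(i)}(t)-y_{\rm ref}^{(i)}(t)\|$ through the $c_i$, so the aggregate $\chi$ of \eqref{eq:C} together with the refinement $\vp_0(\rho)\ge\chi$ (condition \eqref{eq:vp2}) bounds $y,\dot y,\dots,y^{(r-1)}$ for $t\ge t_k^++\rho$. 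I would then feed the resulting bound on $\|y\|$ into $\eta(t)=e^{Q(t-t_k^+)}\eta(t_k^+)+\int_{t_k^+}^{t}e^{Q(t-\sigma)}Py(\sigma)\,\ds$ and use \Cref{Ass:Q,Ass:SP}: because the interval has length at least $\delta$, the decay $Me^{-\mu\delta}$ contracts the contribution of $\eta(t_k^+)$, and by the choice of $\delta$ in \eqref{eq:delta_1} the internal state is driven back into $\{\|\eta\|\le\eta^*\}$ by the end of the interval.

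On a loss interval the input vanishes and \eqref{eq:System} is linear; writing the external state as $\xi:=(y,\dot y,\dots,y^{(r-1)})$, I would bound its growth over a window of length at most $\Delta$ by a Gr\"onwall/variation-of-constants argument in which the companion structure of the external dynamics together with \Cref{Ass:R} yields the factor $e^{\beta\Delta}$, while the couplings $S\eta$ and $Py$ are absorbed using \Cref{Ass:Q,Ass:SP}. The bounds \eqref{eq:Delta_1}, \eqref{eq:Delta_2}, \eqref{eq:Delta_3} on the dropout length are exactly what keeps the increments of $\xi$ and $\eta$ small, i.e.\ controlled below a factor $q/A_r$. Since the funnel resets to width $\vp_0(0)$, the value $\|e(t_k^+)\|$ is bounded by the quantity $E$ appearing in \eqref{eq:vp1}; the factor $A_r$ from \eqref{eq:Ar} budgets the amplification incurred at each of the $r$ steps of the recursion in \eqref{def:control-scheme}, so the upper bound $\vp_0(0)\le q/(A_rE)$ in \eqref{eq:vp1} forces $\|e_i(t_k^+)\|\le q<1$ for all $i$, re-establishing the hypothesis of \cite[Thm.~1.9]{BergIlch21} for the next availability interval.

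Putting the two estimates together produces a recursion relating the invariants at $t_{k+1}^+$ to those at $t_k^+$, and I would close it by showing that the contraction gained during availability, of order $e^{-\mu\delta}$ and governed by \eqref{eq:delta_1}, \eqref{eq:delta_2}, strictly dominates the expansion $e^{\beta\Delta}$ incurred during the preceding loss, so that both the internal bound and the restart condition persist for every $k\in\N$, uniformly. With the invariants in force, \ref{ome_inf} follows because neither the funnel mechanism (on availability intervals, by \cite{BergIlch21}) nor the linear flow (on loss intervals) admits finite-time blow-up; \ref{error_funnel} holds because $\vp(t)\|e(t)\|<1$ on each availability interval while $\vp(t)=0$ imposes nothing during losses; and \ref{bounded_u} follows since the control is bounded by $\alpha(c_r^2)c_r$ on availability intervals and zero otherwise, while the uniform bounds on $y,\dots,y^{(r-1)}$ from $\chi$ and from the loss-interval estimates give $y\in\cW^{r,\infty}(\rp;\R^m)$. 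The \emph{main obstacle} is precisely this closing step: the internal state may genuinely grow during a loss (indeed $M\ge 1$, so the decay estimate is vacuous at the instant a measurement returns), and one must show that the guaranteed window of length at least $\delta$ restores the invariant before the next dropout. Balancing the loss-growth factor $e^{\beta\Delta}$ against the availability-decay factor $e^{-\mu\delta}$, uniformly in $k$ and without any a priori schedule of dropouts, is the crux, and it is exactly this balance that the inequalities \eqref{eq:Delta_1}--\eqref{eq:Delta_3} and \eqref{eq:delta_1}--\eqref{eq:delta_2} together with the funnel-width choice \eqref{eq:vp1}--\eqref{eq:vp2} are engineered to guarantee.
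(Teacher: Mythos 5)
Your proposal follows essentially the same route as the paper: induction over the intervals in \eqref{eq:intervals}, a Gr\"onwall bound with factor $e^{\beta\Delta}$ on loss intervals (the paper's Lemma~\ref{lem:x_lost_bounded}), the bound $\|\textbf{e}(t_k^+)\|\le E$ combined with $\vp_0(0)\le q/(A_r E)$ from \eqref{eq:vp1} to recover $\|e_i(t_k^+)\|\le q<1$ (the paper's Lemma~\ref{Lem:estimation_Ar}), condition \eqref{eq:vp2} with the constant $\chi$ to control the state at the end of each availability interval, and the interplay of \eqref{eq:Delta_1}--\eqref{eq:Delta_3} and \eqref{eq:delta_1}--\eqref{eq:delta_2} to propagate $\|\eta(t_k^+)\|\le\eta^*$. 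However, your argument for assertion~\ref{bounded_u} has a genuine gap. You claim that \cite[Cor.~1.10]{BergIlch21} yields $\|e_i(t)\|\le c_i<1$ for \emph{all} $i=1,\dots,r$ ``with the $c_i$ from \eqref{eq:ci}'', whence $\|u(t)\|\le\alpha(c_r^2)c_r$. But \eqref{eq:ci} defines only $c_1,\dots,c_{r-1}$, and that corollary bounds only $e_1,\dots,e_{r-1}$ uniformly away from $1$; for the top variable the funnel mechanism gives merely the strict pointwise inequality $\|e_r(t)\|<1$. Since $u=-a\,\alpha(\|e_r\|^2)e_r=-a\,e_r/(1-\|e_r\|^2)$, this does not bound $u$: the input blows up as $\|e_r\|\to 1$. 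On any single availability interval \cite[Thm.~1.9]{BergIlch21} does provide some bound for $u$, but it depends on the data of that interval, and with infinitely many restarts you need a bound uniform in $k$; your constant $c_r$ is simply never defined, so assertion~\ref{bounded_u} is not proved.

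Closing this gap is precisely what the paper's introduction calls the crucial obstacle, and it occupies Step~3 of the paper's proof: one first derives a bound $\bar\eta$ for the internal state valid on \emph{all} intervals, collects the system and funnel data into a constant $\tilde C$, exploits the sign-definiteness of $\Gamma+\Gamma^\top$ (choosing $\gamma>0$ with $\tfrac12 v^\top(\Gamma+\Gamma^\top)v\ge\gamma\|v\|^2$), defines $\ve\in(0,1)$ by $\tilde C/(\gamma\vp_0(0))=\ve/(1-\ve^2)$ and $c_r:=\max\{\|e_r^0\|^2,\ve,q^2\}^{1/2}<1$, and then shows by a dissipativity/contradiction argument that $\tfrac12\ddt\|e_r(t)\|^2\le\big(\tilde C-\gamma\vp_0(0)\tfrac{c_r}{1-c_r^2}\big)\|e_r(t)\|\le 0$ whenever $\|e_r(t)\|\ge c_r$, so that $\|e_r(t)\|\le c_r$ throughout every availability interval; the uniform restart bound $\|e_r(t_k^+)\|\le q\le c_r$ is exactly what makes this argument reapplicable on each interval $[t_k^+,t_{k+1}^-)$, and only then does $\|u(t)\|\le c_r/(1-c_r^2)$ follow globally. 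A secondary, repairable looseness: the theorem speaks of \emph{every} maximal solution, so besides your constructive concatenation you also need the formal existence framework (the paper's Step~1, with the relatively open domain $\cD$ and the property that the closure of a maximal solution's graph is not compact in $\cD\times\R^{n-rm}$), which the paper then combines with the Step~3 bounds to conclude $\omega=\infty$ in Step~4.
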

The proof is relegated to the appendix.
The proof is constructive and we provide an explicit global bound for the control input~$u$.

\begin{Remark}
The maximal duration of measurement losses~$\Delta$, the minimal time of measurement availability~$\delta$ and the lower bound for~$\eta^*$ in~\eqref{eq:eta-star} depend on the system parameters.
We emphasize that $\eta^*$, which is a bound for the initial internal state, may be chosen larger than in~\eqref{eq:eta-star}. This results in a larger initial width of the funnel boundary due to~\eqref{eq:vp1}.
\end{Remark}

In view of Remark~\ref{Rem:int-dyn} we present the following result as a direct consequence of Theorem~\ref{Thm:FunnnelControl}.

\begin{Corollary}
Consider a system~\eqref{eq:System} with $(A,B,C) \in \Sigma_{r,m}$ with trivial internal dynamics, i.e., we have $n=rm$, and the second equation in~\eqref{eq:System} is absent, and with initial conditions $(y_0^0,\ldots,y_{r-1}^0) \in \R^{rm}$.
Let~$y_{\rm ref} \in \cW^{r,\infty}(\rp; \R^m)$, and $a(\cdot)$ be an availability function as in~\eqref{eq:sensor} which is left-continuous and has only finitely many jumps in each compact interval.
Let $ \Delta> 0$ be an arbitrary long duration of possible signal losses, and $\delta> 0$ be an arbitrary short duration of guaranteed signal availability.
Choose the design parameter~$\eta^*$ as in~\eqref{eq:eta-star}, where $s=p=M=0$ and~$\mu =1$ in \Cref{ass:signal-lost,ass:signal-available}.
Further, choose $\vp_{0}\in\Phi$ satisfying~\eqref{eq:vp1}, \eqref{eq:vp2}.
If the initial conditions~\eqref{eq:initial_ei} are satisfied,
then the control scheme~\eqref{def:control-scheme} applied to system~\eqref{eq:System} yields an initial value problem which has a solution,
every solution can be extended to a maximal solution and every maximal solution~$y: [0,\omega) \to \R^m $ has the properties~\ref{ome_inf}--\ref{bounded_u} from \Cref{Thm:FunnnelControl}.
\end{Corollary}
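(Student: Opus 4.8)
The plan is to verify that the hypotheses of \Cref{Thm:FunnnelControl} are met for the degenerate parameter choice $s=p=M=0$, $\mu=1$, and then simply to invoke that theorem. Since the corollary only specializes the main result, no new estimates are required: the task reduces to checking that every assumption on $\Delta$, $\delta$, $\eta^*$ and $\vp_0$ collapses into a condition that is either vacuous or freely satisfiable, and to observing that the initial condition~\eqref{eq:initial_eta} disappears together with the $\eta$-component.

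First I would confirm that the trivial internal dynamics fit the system class. With the second equation in~\eqref{eq:System} absent we have $\|S\|=s=0$, $\|P\|=p=0$ and, by the stated convention for trivial internal dynamics, $M=0$, $\mu=1$; thus \Cref{Ass:Q,Ass:SP} hold trivially, while \Cref{Ass:R} reduces to $\sum_{i=1}^r\|R_i\| \le \beta+1$, which fixes an admissible $\beta$ (e.g. $\beta := \sum_{i=1}^r \|R_i\|$). Hence $(A,B,C)\in\Sigma_{m,r}$ with these constants, and \Cref{Ass:rel_deg} holds by hypothesis.

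Next I would check \Cref{ass:signal-lost,ass:signal-available} for arbitrary $\Delta,\delta>0$, as already indicated in \Cref{Rem:int-dyn}. Substituting $s=p=M=0$ into~\eqref{eq:Delta_1},~\eqref{eq:Delta_2},~\eqref{eq:Delta_3} turns each left-hand side into $0$, so all three hold since $q/A_r>0$; likewise the right-hand sides of~\eqref{eq:delta_1} and~\eqref{eq:delta_2} vanish, because their numerators become $0$ while the denominator $\mu q = q>0$ stays positive, so both are satisfied for every $\delta>0$. Thus any prescribed $\Delta,\delta>0$ is admissible. For the design parameters, the lower bounds~\eqref{eq:eta_star_ref} and~\eqref{eq:eta_star_vp1} reduce to $\eta^*\ge0$, so only~\eqref{eq:eta_star_refall} remains binding and $\eta^*$ can be fixed accordingly; in~\eqref{eq:vp1} the lower bound $\vp_{0,\rm min}=pM/(\mu\eta^*)$ becomes $0$ and the constant $E$ simplifies to $\|x_{\rm ref}\|_\infty(1+e^{\beta\Delta})+e^{\beta\Delta}$, leaving the positive admissible range $0\le\vp_0(0)\le q/(A_rE)$; condition~\eqref{eq:vp2} is then met by refining $\vp_0$ as in Step~5. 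Finally, since $n=rm$ there is no internal state, so~\eqref{eq:initial_eta} is vacuous and only~\eqref{eq:initial_ei}, which is assumed, must hold.

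With all hypotheses of \Cref{Thm:FunnnelControl} verified, its conclusion yields existence of a solution, extendability to a maximal solution, and properties~\ref{ome_inf}--\ref{bounded_u} for every maximal solution; because the closed-loop system has no $\eta$-component, the maximal solution is precisely a function $y:[0,\omega)\to\R^m$ with the asserted properties. The only point requiring care is the bookkeeping of the parameter conventions---that the rule of setting $M:=0$, $\mu:=1$ together with $s=p=0$ is exactly what makes the otherwise coupled inequalities~\eqref{eq:Delta_1}--\eqref{eq:delta_2} and the $\eta^*$/funnel conditions decouple---but no genuine analytic difficulty arises, since the corollary is a specialization rather than an extension of the main theorem.
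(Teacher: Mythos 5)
Your proposal is correct and matches the paper's intent exactly: the paper offers no separate proof, presenting the corollary as a direct consequence of Theorem~\ref{Thm:FunnnelControl} in view of Remark~\ref{Rem:int-dyn}, which is precisely the verification you carry out (all conditions in \Cref{ass:signal-lost,ass:signal-available} and the bounds~\eqref{eq:eta_star_ref},~\eqref{eq:eta_star_vp1},~$\vp_{0,\rm min}$ degenerate under $s=p=M=0$, $\mu=1$, and~\eqref{eq:initial_eta} is vacuous without an $\eta$-component). Your bookkeeping, including that \Cref{Ass:R} relaxes to $\sum_{i=1}^r\|R_i\|\le\beta+1$ and that only~\eqref{eq:eta_star_refall} remains binding for $\eta^*$, is accurate.
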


\section{Simulation}\label{Sec:Sim}
To illustrate the action of the proposed controller, we numerically simulate an application of the controller~\eqref{def:control-scheme} to a system~\eqref{eq:System}.
We consider the \textit{mass-on-car} system introduced in~\cite{SeifBlaj13}, where on a car with mass~$m_1$ (in kg) a ramp is mounted on which a mass~$m_2$ (in kg), coupled to the car by a spring-damper-component with spring constant~$k > 0$ (in N/m) and damping~$d>0$ (in Ns/m), passively moves; a control force~$F = u$ (in N) can be applied to the car.
The situation is depicted in \Cref{Fig:Mass-on-a-car}.
\begin{figure}
\begin{center}
\includegraphics[width=0.6\textwidth]{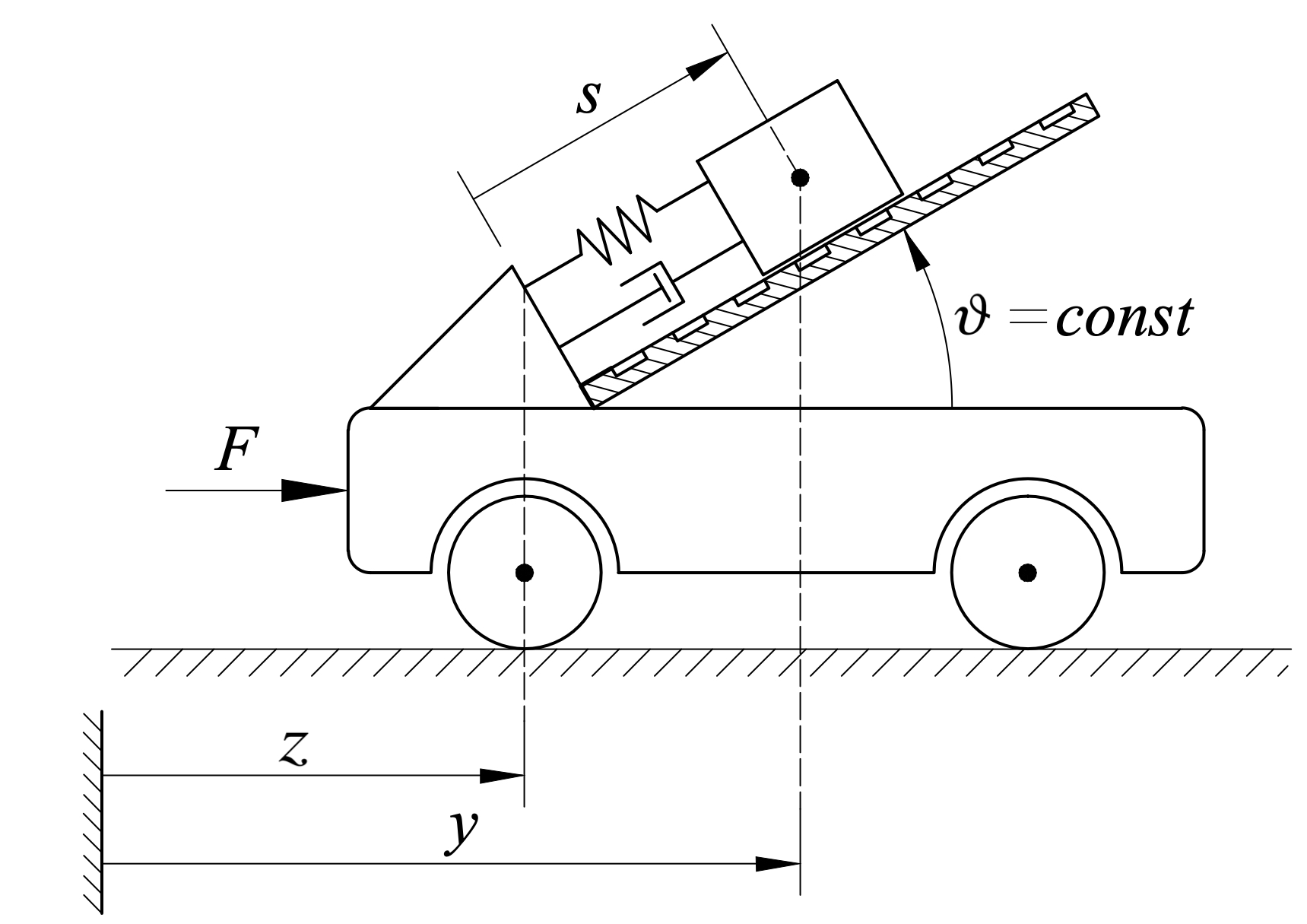}
\end{center}
\caption{Mass-on-car system. The figure is taken from~\cite{BergIlch21}.}
\label{Fig:Mass-on-a-car}
\end{figure}
The equations of motion for the system read
\begin{subequations} \label{eq:Mass-on-car-system}
\begin{equation}  \label{eq:Mass-on-car}
\begin{bmatrix}
m_1 + m_2 & m_2 \cos(\vartheta) \\ m_2 \cos(\vartheta) & m_2
\end{bmatrix}\!
\begin{pmatrix}
\ddot z(t) \\
\ddot s(t)
\end{pmatrix} \!+\!
\begin{pmatrix}
0 \\
ks(t) \!+\! d \dot s(t)
\end{pmatrix} \!=\!
\begin{pmatrix}
u(t) \\
0
\end{pmatrix}\!,
\end{equation}
with the horizontal position of the second mass~$m_2$ as output
\begin{equation} \label{eq:Mass-on-car-output}
y(t) = z(t) + \cos(\vartheta) s(t).
\end{equation}
\end{subequations}
For the simulation we choose the parameters $m_1 = 4$, $m_2 = 1$, $k=2$, $d = 1$, $\vartheta = \pi/4$ and the initial values $z(0)=s(0)=\dot z(0) = \dot s(0) = 0$.
As a reference signal we choose~$y_{\rm ref}: \rp \to \R$, $t \mapsto \cos(t)$, by which $\|y_{\rm ref}\|_\infty = \|x_{\rm ref}\|_\infty = 1$.
As elaborated in~\cite[Sec.~3]{BergIlch21}, for the above parameters system~\eqref{eq:Mass-on-car-system} has relative degree two with respect to the output~\eqref{eq:Mass-on-car-output}, and hence belongs to~$\Sigma_{1,2}$.
Thus, it can equivalently be written in the form~\eqref{eq:System} with $r=2$, and
\begin{equation*}
 R_1 = 0, \ R_2 = \frac{8}{9}, \
S = \frac{-4\sqrt{2}}{9} \begin{bmatrix} 2 & 1 \end{bmatrix},\ \Gamma = \frac19, \
 Q = \begin{bmatrix} 0 & 1 \\ -4 & -2  \end{bmatrix}, \
P = 2 \sqrt{2} \begin{bmatrix} 1 \\ 0 \end{bmatrix}.
\end{equation*}
\Cref{Ass:Q} is satisfied with $\mu = 0.3305$ and $M = 2.2477$.
According to \Cref{ass:signal-lost,ass:signal-available} with $q=0.95$, 
we assume
$\Delta \le 5.01 \cdot 10^{-2} \, \rm s$ and
$\delta \ge 18.8 \, \rm s$.
Condition~\eqref{eq:eta-star} is satisfied with $\eta^* = 133 \, 145$.
We choose $\vp_0(t) = (ae^{-bt} + c)^{-1}$.
According to~\eqref{eq:vp1} the funnel function has to satisfy
\begin{equation*}
\vp_{0,\rm min} =  1.4449 \cdot 10^{-4} \le \vp_0(0) \le  1.4449 \cdot 10^{-4} = \vp_{0,\rm max} ,
\end{equation*}
and we choose $c = 0.03$, $a = 1/\vp_{0, \rm min} - c$, and $b = 1$.
Then, the constant from \eqref{eq:C} is given as $\chi = 21.4683$, and condition~\eqref{eq:vp2} is satisfied with $\vp(\rho) = 33$, where $\rho = 0.99 \, \delta$.\\
We simulate output tracking over the interval $0-60$ seconds.
The simulation has been performed in \textsc{Matlab} (solver: \textsf{ode23tb}).
For illustration purposes we consider two losses and reappearances of the output signal.
\begin{figure}[ht]
\centering
         \includegraphics[width=0.46\linewidth]{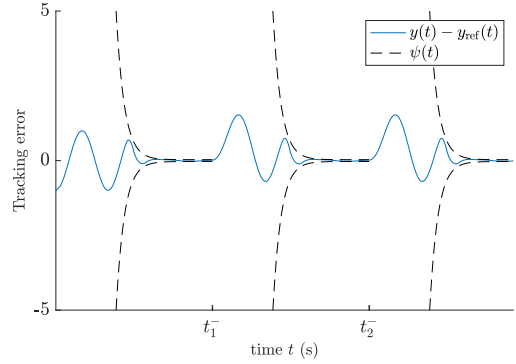}
         \caption{Error between the output~$y$ and the reference signal~$y_{\rm ref}$, and funnel boundary~$\psi = 1/\vp$.}
         \label{Fig:Error}
\end{figure}
\Cref{Fig:Error} shows the error~$e = y - y_{\rm ref}$ between the system output and the reference signal.
As expected, the error evolves within the prescribed funnel boundaries whenever the output signal is available, and remains bounded whenever the signal is not available.
In \Cref{Fig:Control} the control input is depicted. 
It can be seen that on large time intervals, especially after $t_1^-$ and $t_2^-$, the input signal is zero. Only when the performance funnel gets tighter again a large control action is necessary, which induces some small peaks in the input when a small tracking error is enforced. But even in the presence of measurement losses the control input is bounded and the evolution of the tracking error within the (shifted) performance funnel is guaranteed.
\begin{figure}[ht]
\centering
\begin{minipage}[t]{0.45\textwidth}
         \includegraphics[width=\linewidth]{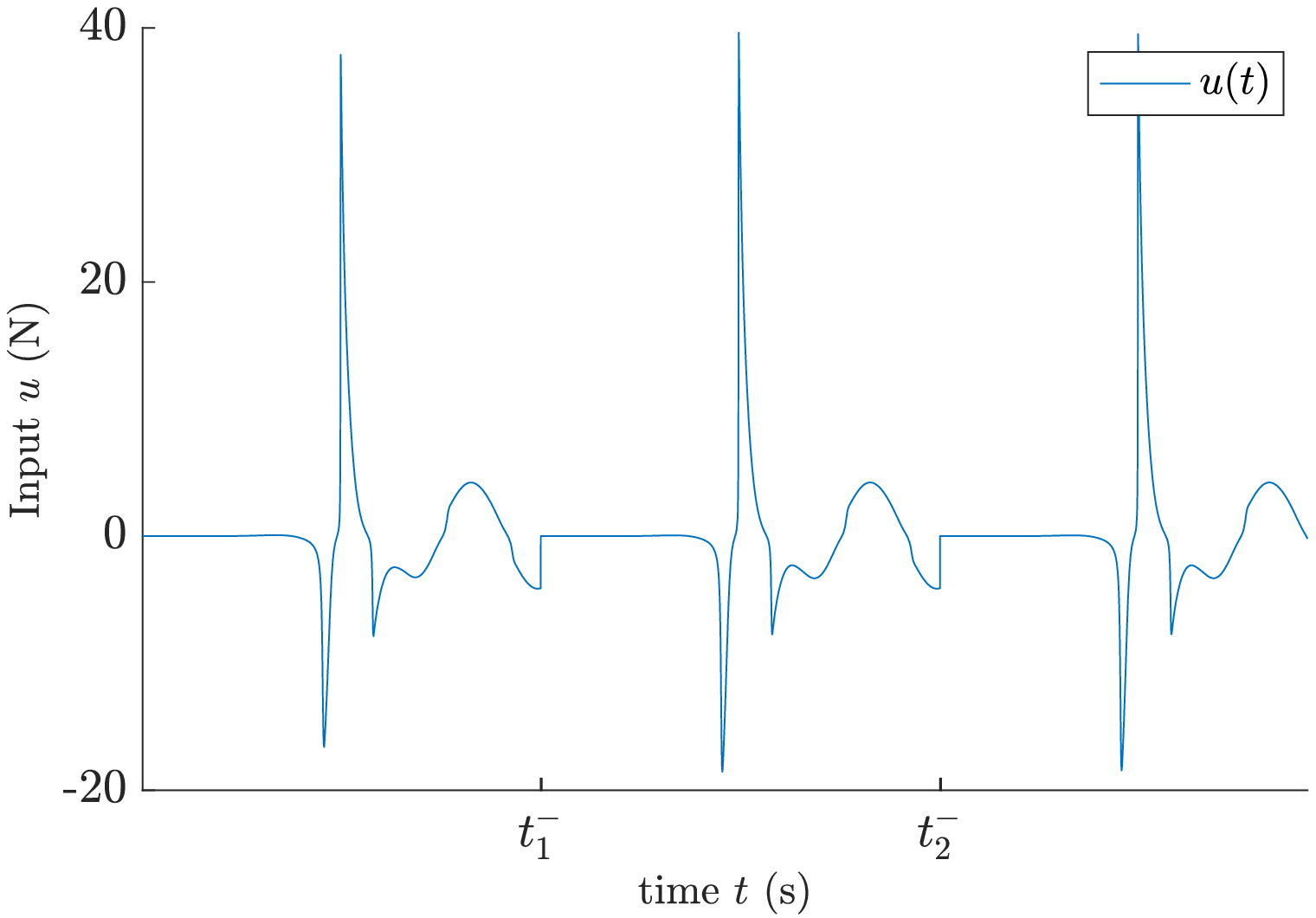}
         \caption{Control input~$u$.}
         \label{Fig:Control}
\end{minipage}
\hfill
\begin{minipage}[t]{0.45\textwidth}
         \includegraphics[width=\linewidth]{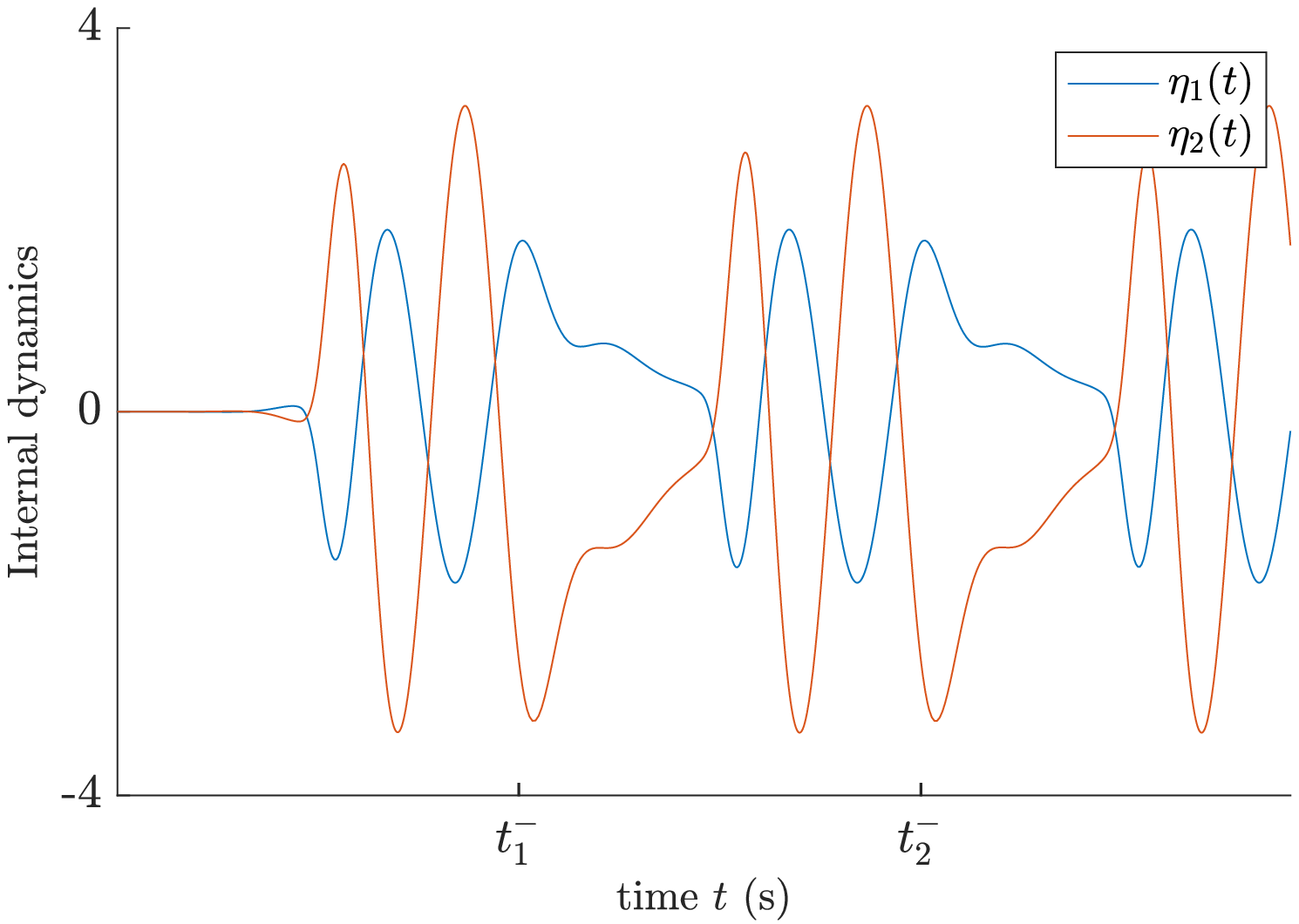}
         \caption{Evolution of the internal dynamics.}
         \label{Fig:ID}
\end{minipage}
\end{figure}
In \Cref{Fig:ID} the evolution of the state of the internal dynamics is depicted (available in simulation, but not used in the controller).
It can be seen that the internal dynamics strongly influence the evolution of the system output during periods of measurement losses.
This illustrates the importance of incorporating the internal dynamics in the theoretical estimates.
Note that, however, the state $\eta(t)$ is much smaller than the theoretical bound $\eta^*$, which is quite conservative.

\ \\
\textbf{\Cref{ass:signal-lost,ass:signal-available}, and conditions~\eqref{eq:vp1},~\eqref{eq:vp2} are conservative.}
To demonstrate that the controller also works in situations where the estimates in \Cref{ass:signal-lost,ass:signal-available} on the measurement availability are not satisfied, we run a second simulation, where we choose a much larger $\Delta$ (measurement lost) and much smaller $\delta$ (measurement guaranteed). Moreover, we choose a funnel boundary, which is much tighter at the measurement reappearance than prescribed by~\eqref{eq:vp1}.
The results are depicted in \Cref{Fig:Error_lesscons,Fig:Control_lesscons}, where we used the following parameters:
the signal is lost for $\Delta = 2$, and guaranteed available only for $\delta = 3$; the funnel function is given by
$\varphi_0(t) = \big(a e^{-bt} + c\big)^{-1}$, where $a=5$, $b=1$, and $c=0.2$.
\begin{figure}[ht]
\centering
\begin{minipage}[t]{0.45\textwidth}
\includegraphics[width=\linewidth]{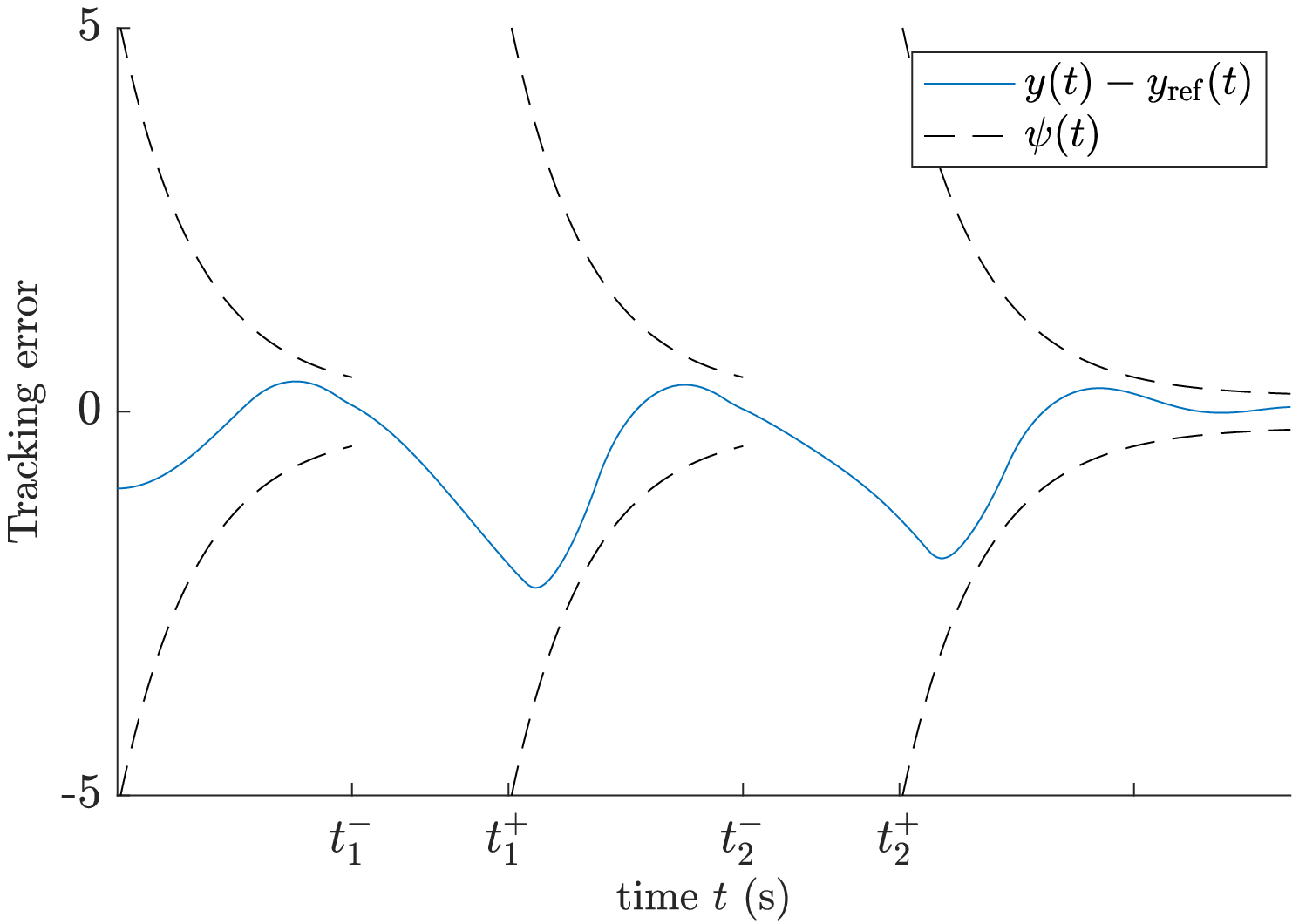}
         \caption{Evolution of the tracking error for less conservative estimates, and funnel boundary~$\psi = 1/\vp$.}
         \label{Fig:Error_lesscons}
\end{minipage}
\hfill
\begin{minipage}[t]{0.45\textwidth}
         \includegraphics[width=\linewidth]{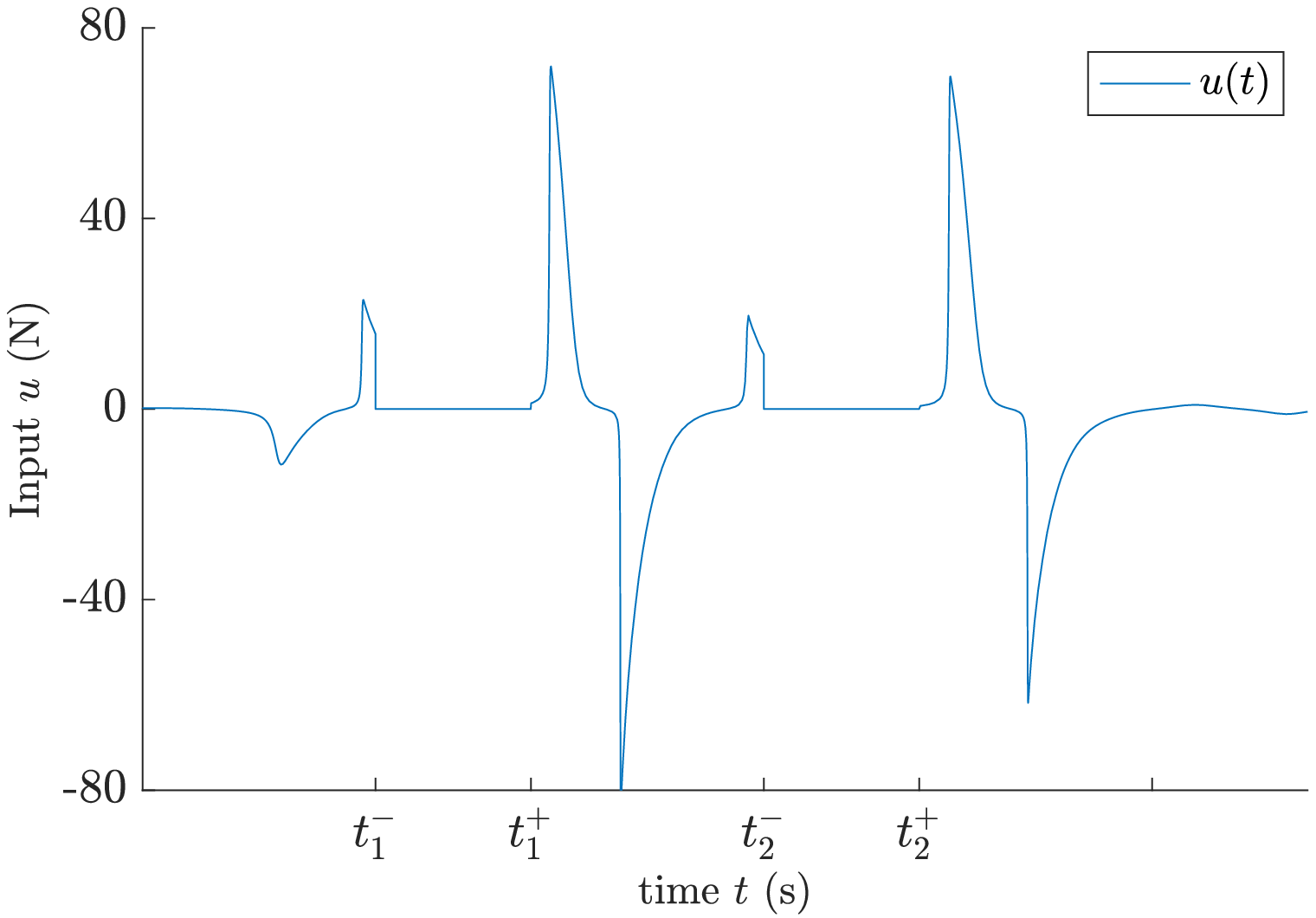}
         \caption{Control input~$u$ for less conservative estimate.}
         \label{Fig:Control_lesscons}
\end{minipage}
\end{figure}
Although \Cref{ass:signal-lost,ass:signal-available} and conditions~\eqref{eq:vp1},~\eqref{eq:vp2} are not satisfied, respectively, the proposed controller~\eqref{def:control-scheme} achieves the control objective~\eqref{eq:error_performance} even for long measurement losses.
This very clearly illustrates that the estimates required for the proof of Theorem~\ref{Thm:FunnnelControl} are worst case estimates.

\section{Conclusion}

In the present paper we introduced a novel funnel controller for output reference tracking of linear minimum phase systems which are prone to losses of the output measurements.
We proved that the closed-loop system has a global solution, and the presented feedback law achieves a prescribed transient behavior of the tracking error within a (shifted) performance funnel and all involved signals are bounded. 
In particular, the input signal is bounded, and the maximal control value can be computed in advance invoking the parameters which define the system class. 
Feasibility of the control requires a maximal duration of measurement losses~$\Delta$ and a minimal time of measurement availability~$\delta$, for both of which upper and lower bounds, respectively, have been derived explicitly. However, these bounds are conservative (as can be seen by the numerical example in \Cref{Sec:Sim}) and further research is necessary to find better estimates.\\
Another topic for future research is the extension of the results to nonlinear systems. Regarding this, it is clear that some kind of Lipschitz condition is required for the system, because otherwise a blow-up of the solutions cannot be excluded on time-intervals where the output measurement is not available.
Furthermore, the controller performance might be improved by including available knowledge of system parameters, e.g., applying a suitable non-zero open-loop control signal on intervals where no output measurement is available.

\section*{Acknowledgements}
Funded by the Deutsche Forschungsgemeinschaft (DFG, German Research
Foundation) – Project-IDs 362536361 and 471539468.

\appendix
\section{Technical lemmas}
We provide some technical results to be used in the proof of Theorem~\ref{Thm:FunnnelControl}.
First, we record that, if \Cref{Ass:Q} is satisfied, then we have for $t\ge t_0 \ge 0$
\begin{align}
\int_{t_0}^{t} \| e^{Q (s-t_0)}\| \ds & \le \frac{M}{\mu} ( 1 - e^{-\mu (t-t_0)}) \le \frac{M}{\mu}, \label{eq:exp_const}\\
\int_{t_0}^{t} \| e^{Q (s-t_0)}\| \ds & \le M \int_{t_0}^{t}  e^{-\mu(s-t_0)}  \ds \le M (t-t_0) . \label{eq:exp_lin}
\end{align}
Further, we recall that the second of equations~\eqref{eq:System} has the solution
\begin{equation} \label{eq:sol_eta}
\eta(t) = e^{Q(t-t_0)} \eta(t_0) + \int_{t_0}^{t} e^{Q(t-s)} P y(s) \ds
\end{equation}
and, hence, for any signal $y \in \cL^\infty(\rp;\R^m)$ we have
\begin{equation} \label{eq:eta_fist_estimatation}
\| \eta(t) \| \le M e^{-\mu (t-t_{0})} \|\eta(t_0)\| +  \|P\|\, \| y|_{[t_0,t]} \|_\infty  \int_{t_0}^{t}  \| e^{Q (s-t_0)}\| \ds .
\end{equation}
We derive a lemma which provides an exponential bound for the solution of~\eqref{eq:System} whenever no measurement is available.

\begin{Lemma} \label{lem:x_lost_bounded}
Choose parameters $M,\mu,s,p,\beta$ for \Crefrange{Ass:Q}{Ass:R} and consider a system~\eqref{eq:System} with $(A,B,C) \in \Sigma_{m,r}$, satisfying these assumptions. 
Then for all solutions $(y,\eta)\in \cC^{r-1}([0,\omega),\R^m) \times \cC([0,\omega),\R^{n-rm})$, $\omega\in (0,\infty]$, of System~\eqref{eq:System}
with $u|_{(t_0,t_1)} = 0$ for $0\le t_0 < t_1 \le \omega$ and with $x = (y^\top,\dot y^\top,\ldots,(y^{(r-1)})^\top)^\top$ we have that for all $t\in[t_0,t_1)$
\begin{equation*}
\begin{aligned}
\|x|_{[t_0,t]}\|_\infty \le \left( \|x(t_0)\| + s M \|\eta(t_0)\| \int_{t_0}^{t} e^{-\mu(\tau-t_0)} {\rm d} \tau \right) e^{ \beta (t-t_0)}.
\end{aligned}
\end{equation*}
\end{Lemma}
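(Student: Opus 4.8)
The plan is to exploit that on $(t_0,t_1)$ the input vanishes, so the $x$-dynamics decouple from the funnel feedback and reduce to a linear system forced only by the internal state. Writing $x=(x_1^\top,\dots,x_r^\top)^\top$ with $x_j=y^{(j-1)}$, equations~\eqref{eq:System} with $u|_{(t_0,t_1)}=0$ read $\dot x_j = x_{j+1}$ for $j=1,\dots,r-1$ and $\dot x_r = \sum_{i=1}^r R_i x_i + S\eta$, i.e.\ $\dot x = \mathcal{A}x + \mathcal{S}\eta$ with the block companion matrix $\mathcal{A}$ carrying the identity super-diagonal and the blocks $R_1,\dots,R_r$ in its last row, and $\mathcal{S}=(0,\dots,0,S^\top)^\top$. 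I would use throughout that $\|y(s)\|=\|x_1(s)\|\le\|x(s)\|$, and that, by \Cref{Ass:SP}, $\|\mathcal{S}\eta\|\le s\|\eta\|$.

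First I would derive a scalar differential inequality for $t\mapsto\|x(t)\|$. Differentiating $\tfrac12\|x\|^2$ and using the companion structure, the chain cross-terms $\langle x_j,x_{j+1}\rangle$ and the last-row terms $\langle x_r,\sum_i R_i x_i\rangle$ are estimated by Cauchy--Schwarz, while the forcing contributes $\langle x_r,S\eta\rangle\le s\|\eta\|\,\|x\|$; dividing by $\|x\|$ this yields $\ddt\|x(t)\| \le \kappa\,\|x(t)\| + s\|\eta(t)\|$ for a constant $\kappa$ built from the integrator chain and $\sum_i\|R_i\|$.

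The key difficulty is the mutual coupling: $\eta$ cannot simply be bounded by its initial value, since it is itself driven by $y$ through $Py$. To close the estimate I would insert the variation-of-constants formula~\eqref{eq:sol_eta} for $\eta$ on $[t_0,t]$ and split it into the homogeneous part $e^{Q(\tau-t_0)}\eta(t_0)$ and the convolution $\int_{t_0}^\tau e^{Q(\tau-\sigma)}Py(\sigma)\,\ds$. Using \Cref{Ass:Q} (via~\eqref{eq:exp_const}) and \Cref{Ass:SP}, integrating the differential inequality from $t_0$ to $t$ turns the homogeneous part into the explicit forcing $sM\|\eta(t_0)\|\int_{t_0}^t e^{-\mu(\tau-t_0)}\,{\rm d}\tau$, whereas the convolution part produces a double integral. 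Swapping the order of integration (Fubini) and bounding $\int_\sigma^t e^{-\mu(\tau-\sigma)}\,{\rm d}\tau\le 1/\mu$ together with $\|y\|\le\|x\|$ converts it into $\tfrac{spM}{\mu}\int_{t_0}^t\|x(\sigma)\|\,{\rm d}\sigma$, i.e.\ a term of the same Gronwall type as $\kappa\int\|x\|$.

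Collecting everything gives an integral inequality $\|x(t)\|\le g(t) + \big(\kappa+\tfrac{spM}{\mu}\big)\int_{t_0}^t\|x(\sigma)\|\,{\rm d}\sigma$, where $g(t):=\|x(t_0)\|+sM\|\eta(t_0)\|\int_{t_0}^t e^{-\mu(\tau-t_0)}\,{\rm d}\tau$ is nondecreasing. Here \Cref{Ass:R} enters: its bound on $\sum_i\|R_i\|$ is precisely what is needed to absorb the accumulated rate $\kappa+\tfrac{spM}{\mu}$ into $\beta$, so the comparison/Gronwall lemma for a nondecreasing forcing yields $\|x(t)\|\le g(t)\,e^{\beta(t-t_0)}$. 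Since $g$ is nondecreasing and the exponential factor is increasing, the right-hand side dominates $\|x(\sigma)\|$ for every $\sigma\in[t_0,t]$, so taking the supremum over $[t_0,t]$ gives the claimed bound on $\|x|_{[t_0,t]}\|_\infty$. I expect the main obstacle to be exactly this closing of the $x$--$\eta$ coupling~-- carrying the internal state's $y$-dependence through the Fubini step and verifying that the accumulated growth rate is controlled by $\beta$ via \Cref{Ass:R}~-- rather than the (routine) integrator-chain and Cauchy--Schwarz estimates.
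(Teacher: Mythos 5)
Your proposal is correct and follows essentially the same route as the paper's proof: exploit the companion structure with $u\equiv 0$, insert the variation-of-constants formula~\eqref{eq:sol_eta} for $\eta$, split off the homogeneous part as the $sM\|\eta(t_0)\|\int_{t_0}^t e^{-\mu(\tau-t_0)}\,{\rm d}\tau$ forcing, absorb the accumulated rate (including the $spM/\mu$ coupling term) into $\beta$ via \Cref{Ass:R}, and conclude with Gr\"onwall. The only cosmetic difference is that you bound the convolution term by Fubini on the pointwise quantity $\|x(t)\|$ and pass to the supremum at the end, whereas the paper works directly with $w(t)=\|x|_{[t_0,t]}\|_\infty$ and pulls $\|y|_{[t_0,\tau]}\|_\infty$ out of the convolution integral via~\eqref{eq:exp_const}; both yield the same Gr\"onwall inequality.
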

\begin{proof}
Let $x= (x_1^\top,\ldots,x_r^\top)^\top$ and set $w(t) := \|x|_{[t_0,t]}\|_\infty$ for $t\in [t_0,\omega)$. Then we have that
\[
    \dot x(t) = \begin{pmatrix} x_2(t)\\ \vdots \\ x_r(t) \\  \sum_{i=1}^r R_i x_i(t) + S \eta(t) \end{pmatrix}
\]
for almost all $t\in[t_0,t_1]$ and upon integration we obtain
\[
    \|x(t)\| \!\le\! \|x(t_0)\| +\! \int_{t_0}^t \|x(\tau)\| + \sum_{i=1}^r \|R_i\| \|x_i( \tau)\| + s \|\eta(\tau)\| \,{\rm d}\tau.
\]
Then, using~\eqref{eq:exp_const} and~\eqref{eq:sol_eta}, we have
\begin{align*}
  w(t) %\le \|x(t_0)\| + \sup_{\tau\in[t_0,t]} \int_{t_0}^\tau \|x(\tau)\|  + \|f(\oT(x)(\tau),0)\| \, {\rm d} \tau\\
  &\le  \|x(t_0)\| + \sup_{r\in[{t_0},t]} \int_{t_0}^r \Bigg[ w(\tau) + s \|e^{Q(\tau-t_0)} \eta(t_0)\| \\
  &\quad + \sum_{i=2}^r \|R_i\| \|x_i|_{[t_0,\tau]}\|_\infty   \!+\! \left(\|R_1\| + \frac{s p M}{\mu} \right) \|x_1|_{[t_0,\tau]}\|_\infty  \Bigg] \, {\rm d} \tau \\
  &\le \|x(t_0)\| +\int_{t_0}^t \underset{\le \beta}{\underbrace{\Bigg[ 1 + \left(\sum_{i=1}^r \|R_i\|  +  \frac{s p M}{\mu} \right) \Bigg] }} w(\tau)\,{\rm d} \tau
  +  s M \|\eta(t_0)\| \int_{t_0}^{t} e^{-\mu(\tau-t_0)} \,{\rm d} \tau.
\end{align*}
The assertion then follows from Gr\"onwall's lemma.
%\qed
\end{proof}
The second lemma provides a technical estimate used in the proof of the main result.
\begin{Lemma} \label{Lem:estimation_Ar}
For $k =0,\ldots,r$, $r \in \N$, let~$A_k$ be given by~\eqref{eq:Ak}, and $q \in (0,1)$.
Let $\ell : [0,1) \to [1,\infty)$ be a bijection, and $\lambda, E \ge 0$ with
\begin{equation} \label{eq:est_vp0}
E \lambda \le \frac{q}{A_r(\ell(q^2))}.
\end{equation}
Further let $\xi_0,\ldots,\xi_{r-1} \in \R^n$ with
\begin{equation} \label{eq:est_ei}
\forall\, k \in \{0,\ldots,r-1\}: \ \| \xi_{k} \| \le E,
\end{equation}
and define $\zeta_0:= 0$ and $\zeta_{k+1} \in \R^n$ for $k=0,\ldots,r-1$ by
\begin{equation} \label{def:zeta_i}
\zeta_{k+1} := \lambda \xi_{k} + \ell(\|\zeta_k\|^2) \zeta_k .
\end{equation}
Then
\begin{equation*}
\forall\, k \in \{1,\ldots,r\} : \ \|\zeta_k\| \le \lambda E A_{k-1}(\ell(q^2)) \le q.
\end{equation*}
\end{Lemma}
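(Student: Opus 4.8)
The plan is to prove the two inequalities simultaneously by induction on $k$, writing $L := \ell(q^2) \ge 1$ throughout. The outer bound $\lambda E A_{k-1}(L) \le q$ is essentially free once the inner bound is in place: since $L \ge 1$, every summand of $A_k(L) = \sum_{j=0}^k L^j$ is nonnegative, so $A_{k-1}(L) \le A_r(L)$ for all $k \le r$, and hence $\lambda E A_{k-1}(L) \le \lambda E A_r(L) \le q$ by \eqref{eq:est_vp0}. Thus it suffices to establish $\|\zeta_k\| \le \lambda E A_{k-1}(L)$ for $k=1,\dots,r$; I would carry the companion bound $\|\zeta_k\| \le q$ through the induction as well, since that is exactly what is needed to control the factor $\ell(\|\zeta_k\|^2)$.

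For the base case $k=1$, definition \eqref{def:zeta_i} together with $\zeta_0 = 0$ gives $\zeta_1 = \lambda \xi_0$, so $\|\zeta_1\| \le \lambda E = \lambda E A_0(L)$ using \eqref{eq:est_ei} and $A_0(L)=1$. For the inductive step, assume $\|\zeta_k\| \le \lambda E A_{k-1}(L) \le q$ for some $1 \le k \le r-1$. The triangle inequality applied to \eqref{def:zeta_i} yields
\[
\|\zeta_{k+1}\| \le \lambda \|\xi_k\| + \ell\bigl(\|\zeta_k\|^2\bigr)\,\|\zeta_k\| \le \lambda E + \ell\bigl(\|\zeta_k\|^2\bigr)\,\|\zeta_k\|.
\]
Here the auxiliary bound $\|\zeta_k\| \le q$ enters: it gives $\|\zeta_k\|^2 \le q^2$, and since $\ell$ is increasing (which is the case for the intended choice $\ell = \alpha$, and in fact for every continuous bijection $[0,1)\to[1,\infty)$) we obtain $\ell(\|\zeta_k\|^2) \le \ell(q^2) = L$. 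Substituting this together with the inductive estimate on $\|\zeta_k\|$ gives $\|\zeta_{k+1}\| \le \lambda E\bigl(1 + L\,A_{k-1}(L)\bigr)$.

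The step then closes on the elementary identity
\[
1 + L\,A_{k-1}(L) = 1 + \sum_{j=0}^{k-1} L^{j+1} = \sum_{j=0}^{k} L^{j} = A_k(L),
\]
so that $\|\zeta_{k+1}\| \le \lambda E A_k(L)$, and the accompanying bound $\le q$ follows exactly as in the first paragraph. This completes the induction and hence the proof.

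I expect the main obstacle to be organisational rather than computational: the inner estimate by itself does not bound $\ell(\|\zeta_k\|^2)$, so one must thread the auxiliary inequality $\|\zeta_k\| \le q$ through the recursion in order to evaluate $\ell$ at an argument $\le q^2$ and invoke its monotonicity. The algebraic heart of the argument is that the recursion $A_k(L) = 1 + L\,A_{k-1}(L)$ mirrors the structure of \eqref{def:zeta_i} term for term, so that no slack is lost and the geometric sum $A_r(L)$ appearing in \eqref{eq:est_vp0} is precisely the quantity needed to keep every $\|\zeta_k\|$ below $q$.
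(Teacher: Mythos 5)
Your proof is correct and takes essentially the same route as the paper's: induction on $k$, using the bound $\|\zeta_k\| \le \lambda E A_{k-1}(\ell(q^2)) \le q$ together with monotonicity of $\ell$ to get $\ell(\|\zeta_k\|^2) \le \ell(q^2)$, and closing the step with the identity $1 + s\,A_{k-1}(s) = A_k(s)$. The only cosmetic difference is that you thread the auxiliary bound $\|\zeta_k\| \le q$ explicitly through the induction while the paper establishes $\lambda E A_k(\ell(q^2)) \le q$ once upfront; incidentally, your observation that monotonicity of $\ell$ requires justification (the lemma assumes only a bijection) is a point the paper's proof uses without comment.
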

\begin{proof}
First observe that for $s \ge 0$ we have
\begin{equation*} 
\forall\, k \in \N  :\ A_k(s) \le A_k(s) + s^{k+1} = A_{k+1}(s).
\end{equation*}
Furthermore, for $\tilde A_k : = A_k \big(\ell(q^2) \big)$ we have that
\[
   \lambda E \tilde A_k \le \lambda E A_r(\ell(q^2)) \stackrel{\eqref{eq:est_vp0}}{\le} q.
\]
Finally, we show that
\begin{equation} \label{eq:est_ei_Ai}
\forall\, k \in \{1,\ldots,r\} :\ \|\zeta_k\| \le  \lambda E \tilde A_{k-1}
\end{equation}
by induction over $k$. For $k=1$ we have
\begin{equation*}
\begin{aligned}
\|\zeta_1\| & \overset{\eqref{def:zeta_i}}{\le } \lambda \|\xi_0\| \overset{\eqref{eq:est_ei}}{\le } \lambda E .
\end{aligned}
\end{equation*}
Let~\eqref{eq:est_ei_Ai} be true for some $k \in \{1,\ldots,r-1\}$.
Then, using monotonicity of~$\ell(\cdot)$, we obtain
\begin{equation*}
\begin{aligned}
\|\zeta_{k+1}\| & \overset{\eqref{def:zeta_i}}{\le } \lambda \| \xi_{k} \| + \ell(\|\zeta_k \|^2) \|\zeta_k\|  \overset{\eqref{eq:est_ei},\eqref{eq:est_ei_Ai}} {\le } \lambda E + \ell \big((\lambda E \tilde A_{k-1})^2\big)   \lambda E \tilde A_{k-1} \\
& \le \lambda E \big( 1 + \ell( q^2) \tilde A_{k-1} \big)  = \lambda E \big( 1 + \ell( q^2) A_{k-1} (\ell(q^2) ) \big) = \lambda E A_k(\ell(q^2)),
\end{aligned}
\end{equation*}
where we have used that $1+sA_{k-1}(s) = A_k(s)$. This proves~\eqref{eq:est_ei_Ai}.
\end{proof}

\section{Proof of Theorem~\ref{Thm:FunnnelControl}} \label{PROOF} 
\begin{proof}
The proof consists of four consecutive steps. \\
\emph{Step 1.} First, we establish the existence of a solution of~\eqref{eq:System},~\eqref{eq:IC},~\eqref{def:control-scheme}.
With $x_{\rm ref}$ as defined in \Cref{Sec:DesignParameters} and following {Step~1} in the proof of~\cite[Thm.~1.9]{BergIlch21}, we introduce $\cB = \setdef{w\in\R^m}{\|w\|<1}$ and for $\alpha(s)=1/(1-s)$ the map
\[
    \gamma: \cB\to\R^m,\ w\mapsto \alpha(\|w\|^2) w,
\]
and with this the sets $\cD_k$ and maps $\rho_k : \cD_k \to \cB$, $k=1,\ldots,r$ recursively as follows:
\begin{equation*} %\label{def:D_and_rho}
\begin{aligned}
\cD_1 &:= \cB, \ \rho_1 : \cD_1 \to \cB, \ \zeta_1 \mapsto \zeta_1, \\
\cD_k &:= \setdef{ (\zeta_1,\ldots,\zeta_k) \in \R^{km}}{ 
Z := (\zeta_1,\ldots,\zeta_{k-1}) \in \cD_{k-1}, \
\zeta_k + \gamma(\rho_{k-1}(Z)) \in \cB}, \\
\rho_{k} &: \cD_k \to \cB, \ (\zeta_1,\ldots,\zeta_k) \mapsto \zeta_k + \gamma(\rho_{k-1}(\zeta_1,\ldots,\zeta_{k-1})).
\end{aligned}
\end{equation*}
With this we define the set
\begin{equation*}
\cD := \setdef{(t,\xi) \in \rp \times \R^{rm}}{ \vp(t)\|\xi-x_{\rm ref}(t)\| \in \cD_r }
\end{equation*}
and $\rho:\cD\to\cB,\ (t,\xi)\mapsto \rho_r\big(\vp(t)\big(\xi-x_{\rm ref}(t))\big)$. Since $a(\cdot)$ is left-continuous the set $\cD$ is relatively open.
Then, $u$ in~\eqref{def:control-scheme} satisfies
\[
    u(t) = - a(t) \alpha(\|\rho(t,x(t))\|^2) \rho(t,x(t)).
\]
For $\xi = (\xi_1,\ldots,\xi_r)$ we formally define the function $F: \cD \times \R^{n-rm} \to \R^{n}$ by
\[
F(t,\xi,\eta) = \Big(\xi_2,\ldots, \xi_{r},
\sum_{i=1}^r R_i \xi_i + S \eta -a(t) \alpha(\|\rho(t,\xi)\|^2) \rho(t,\xi), Q\eta + P\xi_1 \Big) .
\]
Then we obtain with $x(\cdot) := (y(\cdot),\dot y(\cdot),\ldots,y^{(r-1)}(\cdot))$ an initial value problem
\begin{equation} \label{eq:equiv-ivp}
\begin{aligned}
\begin{pmatrix} \dot x(t)\\ \dot \eta(t)\end{pmatrix} &= F\left(t, x(t),\eta(t) \right), \\
x(0) &= \big(y_0^0,\ldots,y_{r-1}^0\big),\ \eta(0)=\eta^0,
\end{aligned}
\end{equation}
which is equivalent to~\eqref{eq:System},~\eqref{eq:IC},~\eqref{def:control-scheme}.
Note that~$F$ is continuous in~$(\xi_1,\ldots,\xi_{r},\eta)$ and locally essentially bounded and, in particular, measurable in the variable~$t$ regardless of the possible discontinuities of~$a(\cdot)$.
Therefore, since $(0,x(0)) \in \cD$, a straightforward adaption of~\cite[Thm.~B.1]{IlchRyan09} to the current context yields the existence of a maximal solution $(x,\eta): [0,\omega) \to \R^{n}$ of~\eqref{eq:equiv-ivp}, where $\omega \in (0,\infty]$.
Moreover, the closure of the graph of the solution of~\eqref{eq:equiv-ivp} is not a compact subset of~$\cD\times \R^{n-rm}$.

\emph{Step 2.} We establish~\ref{error_funnel} on $[0,\omega)$.
To this end, let $(t_k^-)$, $(t_k^+)$ be as in~\eqref{eq:intervals}.
It is also possible that both sequences contain only finitely many points, then either $a(t) =1$ for $t\ge t_N^+$ or $a(t)=0$ for $t\ge t_N^-$ for some $N\in\N$; the following arguments apply, \textit{mutatis mutandis}, in both cases.
We define $\textbf{e}(\cdot) := x(\cdot) - x_{\rm ref}(\cdot)$. Since we consider a subclass of the system class under consideration in~\cite{BergIlch21},
and since by~\eqref{eq:initial_ei} we have $\vp(0) \textbf{e}(0) \in \cD_r$, the result~\cite[Thm.~1.9]{BergIlch21} restricted to the interval~$[0,t_{1}^-]$ is applicable and ensures assertion~\ref{error_funnel} for $t \in [0,t_{1}^-] \subseteq [0,\omega)$, the inclusion since without measurement losses~\cite[Thm.~1.9]{BergIlch21} yields $\omega = \infty$.
Further, since by construction we have $\vp|_{[t_1^-,t_1^+)} = 0$, assertion~\ref{error_funnel} is true for $t \in [t_1^-,t_1^+) \subseteq [0,\omega)$, the inclusion via standard theory of (linear) differential equations since $u|_{[t_1^-,t_1^+)} = 0$.
In order to reapply~\cite[Thm.~1.9]{BergIlch21} at $t = t_1^+$, we establish that the initial conditions~\eqref{eq:initials} are satisfied for~$t=t_1^+$.
First, we show~\eqref{eq:initial_ei} at $t_1^+$.
We set $\psi(\cdot) := 1/\vp_0(\cdot)$, then we find that
\begin{equation} \label{eq:eta_t1-}
\begin{aligned}
\|\eta(t_1^-)\| & \overset{\eqref{eq:initial_eta},\eqref{eq:exp_lin},\eqref{eq:eta_fist_estimatation}}{ \le} M e^{- \mu \delta} \eta^* + {pM\Delta} \left( \psi(0) + \|y_{\rm ref}\|_\infty \right) 
\overset{\eqref{eq:eta_star_ref}}{\le} 2 M e^{-\mu \delta} \eta^* + {pM\Delta} \psi(0).
\end{aligned}
\end{equation}
By~\cite[Cor.~1.10]{BergIlch21} we have for all $i=0,\ldots,r-2$, and the constants defined in~\eqref{eq:ci} that
\begin{equation}\label{eq:est-e(i)}
\forall\,t \in [0,t_1^-) \, : \ \|e^{(i)}(t)\| \le \psi(t) (c_{i+1} + c_i \alpha(c_i^2)),
\end{equation}
and moreover, since $\| e_r(t) \| < 1$ for $t \in [0,t_1^-)$, we have $\| e^{(r-1)}(t) \| \le \psi(t) ( 1 +  c_{r-1} \alpha(c_{r-1}^2))$.
Hence, $\| \textbf{e}(t) \| \le \chi \psi(t) $ for $\chi$ defined in~\eqref{eq:C}, and in particular,
\begin{equation} \label{eq:Cor1_10}
\| \textbf{e}(t_1^-) \| \le \chi \psi(t_1^-) \le \chi \psi(\rho) {\stackrel{\eqref{eq:vp2}}{\le} 1}
\end{equation}
for $\rho < \delta \le t_1^-$ as in~\eqref{eq:vp2} since $\psi$ is monotonically decreasing by properties of~$\Phi$.
With this, using Lemma~\ref{lem:x_lost_bounded} we obtain
\begin{equation}\label{eq:x_t1-t1+}
\begin{aligned}
\|x|_{[t_1^-,t_1^+]}\|_\infty &\le  \Bigg( \|x(t_1^-)\|   + s M \|\eta(t_1^-)\| \int_{t_1^-}^{t_1^+} e^{-\mu (s-t_1^-)} \ds \Bigg) e^{\beta (t_1^+-t_1^-)} \\
& \overset{\eqref{eq:exp_lin}}{\le}  \Bigg( \| \textbf{e}(t_1^-) \| + \|x_{\rm ref}\|_\infty   + s M \Delta \| \eta(t_1^-) \| \Bigg) e^{\beta (t_1^+-t_1^-)} \\
& \overset{\eqref{eq:Cor1_10}}{\le} \|x_{\rm ref}\|_\infty  e^{\beta \Delta}
 + \left( {1} + s M \Delta \| \eta(t_1^-) \| \right) e^{\beta \Delta}.
\end{aligned}
\end{equation}
Therefore,
\begin{equation} \label{eq:e_t1+}
    \begin{aligned}
        \| \textbf{e}(t_1^+) \| 
        & \le \|x_{\rm ref} (t_1^+)\| \! +\! \|x(t_1^+)\|  \le \|x_{\rm ref}\|_\infty + \|x|_{[t_1^-,t_1^+]}\|_\infty \\
& \stackrel{\eqref{eq:x_t1-t1+}}{\le}  \|x_{\rm ref}\|_\infty \left(1+ e^{\beta \Delta} \right)  + \left( {1} + s M \Delta \| \eta(t_1^-) \| \right) e^{\beta \Delta}  \\
& \stackrel{\eqref{eq:eta_t1-}}{\le} \|x_{\rm ref}\|_\infty \left(1+ e^{\beta \Delta} \right) + e^{\beta \Delta} + s M \Delta \left( 2 M e^{-\mu \delta} \eta^* + {pM\Delta} \psi(0) \right) e^{\beta \Delta} \\
& \stackrel{\eqref{eq:vp1}}{\le} \|x_{\rm ref}\|_\infty \left(1+ e^{\beta \Delta} \right) + e^{\beta \Delta} + sM \Delta e^{\beta \Delta} \left( 2 M e^{-\mu \delta} + {\mu\Delta} \right) \eta^*
=  E.
    \end{aligned}
\end{equation}
Invoking~\eqref{eq:vp1}, Lemma~\ref{Lem:estimation_Ar} (applied with $\lambda = \vp(t_1^+) = \vp_0(0)$) yields
\begin{equation} \label{eq:ei_t1+}
\begin{aligned}
\|e_i(t_1^+)\| \le q \le c_i < 1, \ i=1,\ldots,r-1, \quad
\|e_r(t_1^+)\| \le q.
\end{aligned}
\end{equation}
Therefore,~$\vp(t_1^+) \textbf{e}(t_1^+) \in \cD_r$.
Furthermore, via~\eqref{eq:eta_t1-}, using Lemma~\ref{lem:x_lost_bounded}
and~\eqref{eq:Cor1_10} we obtain with similar estimates as above
\begin{equation} \label{eq:eta_t1+}
    \begin{aligned}
        \| \eta(t_1^+) \| & \overset{\eqref{eq:exp_lin},\eqref{eq:eta_fist_estimatation}}{ \le} M \| \eta(t_1^-)\| + {pM\Delta} \| y|_{[t_1^-,t_1^+]}\|_\infty   \\
& \overset{\eqref{eq:x_t1-t1+}}{\le}  M \| \eta(t_1^-)\| + {pM\Delta} \left( \|x_{\rm ref}\|_\infty  e^{\beta \Delta}  + \left( {1} + {sM\Delta} \| \eta(t_1^-) \| \right) e^{\beta \Delta}\right) \\ 
&  \overset{\eqref{eq:eta_star_refall}}{\le}  \left( M + {spM^2 \Delta^2} e^{\beta \Delta} \right)  \| \eta(t_1^-)\| + {pM\Delta} e^{-\mu \delta} \eta^*  \\
& \overset{\eqref{eq:eta_t1-}}{\le} \left( M +  {spM^2 \Delta^2} e^{\beta \Delta} \right) \left( 2 M e^{-\mu \delta} \eta^* + {pM\Delta} \psi(0) \right) + {pM\Delta} e^{-\mu \delta} \eta^*  \\
& \overset{\eqref{eq:Delta_1},\eqref{eq:vp1}}{\le}  \left( 4 M^2 e^{-\mu \delta} + {2M\mu\Delta + pM \Delta e^{-\mu \delta}} \right) \eta^* 
\overset{\eqref{eq:delta_1}}{\le}  \eta^*.
    \end{aligned}
\end{equation}
Therefore, the initial conditions~\eqref{eq:initials} are satisfied at $t = t_1^+$ and~\cite[Thm.~1.9]{BergIlch21} is applicable for~$t \ge t_1^+$.
Moreover, invoking~\eqref{eq:ei_t1+}, the estimates~\eqref{eq:eta_t1-},~\eqref{eq:e_t1+} and~\eqref{eq:eta_t1+} are valid for $t=t_2^-$ and $t=t_2^+$, respectively, since $\|\eta(t_1^+)\| \le \eta^*$ and $[t_1^+,t_2^+] \subseteq [0,\omega)$ via the same arguments as above.
Therefore, we obtain the following chain of inductive implications \\

\begin{tikzpicture}[every text node part/.style={align=center}, >=implies, scale=0.9    ]
\centering
% \tikzmath{
% \r = 2.8 ;
% \a = 7;
% }
    %
\foreach \c in {0,...,6}
{
  \node (u\c) at ({-\c*360/7+90}:2.8) (\c) {};
}
\draw[double, thick,->] (0) node[above]{$\vp(t_k^+) \textbf{e}(t_k^+) \in \cD_r$ \\ and $\|\eta(t_k^+)\|\le\eta^*$} -- node[right=0.7em,above]{\eqref{eq:initials}} (1) ;
\draw[double, thick,->] (1) node[right]{funnel control applicable \\ \quad for $t \in [t_k^+,t_{k+1}^-) \subseteq [0,\omega)$} -- node[right]{\eqref{eq:eta-star},~\eqref{eq:vp1},~\eqref{eq:vp2}} (2);
\draw[double, thick,->] (2) node[right]{$\|\eta(\! t_{k+1}^- \!)\|$\,satisfies\,\eqref{eq:eta_t1-}} -- node[below right]{\eqref{eq:e_t1+}} (3) ;
\draw[double, thick,->] (3) node[ below right, node distance = 4em]{ $\|\textbf{e}(t_{k+1}^+)\| \le E$ }  -- node[below]{\eqref{eq:ei_t1+}} (4);
\draw[double, thick,->] (4) node[below left]{$\forall\, i=1,\ldots,r:$ \\ $\|e_i(t_{k+1}^+)\| \le q  < 1$} -- node[below left]{\eqref{eq:vp1}} (5) ;
\draw[double, thick,->] (5) node[left]{$\vp(t_{k+1}^+) \, \textbf{e}(t_{k+1}^+) \in \cD_r$}  -- node[left]{\eqref{eq:eta_t1+}} (6) node[left]{$\|\eta(t_{k+1}^+)\| \le \eta^*$};
\draw[->, >=stealth] (6) -- node[above left]{$k \to k+1$} (0);
\end{tikzpicture}

Summarising, this means that funnel control can be reapplied at $t = t_k^+$ for all $k \in \N$ with $[t_k^+,t_{k+1}^-) \subseteq [0,\omega)$.
This yields~\ref{error_funnel} on $[0,\omega)$.

%%%%%%%%%%%%%%%%%%%%%%%%%%%%%%%%%%%%%%%%%%%%%%%%
%
\emph{Step 3.} We show $y \in \cW^{r,\infty}([0,\omega);\R^m)$ and $u \in \cL^{\infty}([0,\omega);\R^m)$.
Invoking~\eqref{eq:est-e(i)} and~\eqref{eq:x_t1-t1+} we obtain $y \in \cW^{r-1,\infty}([0,\omega);\R^m)$. To obtain a global bound for~$u$ and~$y^{(r)}$ let 
\[
    Y_{\rm max} := \max_{i=0,\ldots,r} \|y_{\rm ref}^{(i)}\|_\infty,\quad \lambda := \inf_{t\ge 0} \psi(t),
\]
$\gamma>0$ such that $\tfrac12 v^\top (\Gamma + \Gamma^\top) v \ge \gamma \|v\|^2$ for all $v\in\R^m$, and recall $\tilde \alpha(s) = (1+s)/(1-s)^2$. Further set
\[
    \bar \eta := \max\left\{\eta^*, M \eta^* + \frac{pM}{\mu} \big(\psi(0) + Y_{\max}\big)\right\},
\]
and observe that 
\[
    \|\eta(t)\|  \overset{\eqref{eq:initial_eta},\eqref{eq:exp_const},\eqref{eq:eta_fist_estimatation}}{ \le} M e^{- \mu (t-t_k^+)} \|\eta(t_k^+)\| + \frac{pM}{\mu} \left( \psi(0) + \|y_{\rm ref}\|_\infty \right) 
    \le  M \eta^* + \frac{pM}{\mu} \left( \psi(0) + Y_{\max} \right) \le \bar \eta
\]
for all $t\in [t_k^+,t_{k+1}^-]$, and that $\|\eta(t)\|\le \eta^* \le \bar \eta$ by a similar estimate as in~\eqref{eq:eta_t1+} for $t\in [t_k^-,t_k^+]$. Define with $c_i$ from~\eqref{eq:ci} the constant
\begin{align*}
 \tilde C:= \mu_0 \left(\!1\!+\! \frac{c_{r-1}}{1-c_{r-1}^2} \! \right) \!+\! \tilde \alpha(c_{r-1}^2) \left(\! \mu_{r-1} \!+\! \frac{c_{r-1}}{1-c_{r-1}^2} \!\right)  \!+\! \sum_{i=1}^r \|R_i\| \left(\!1\!+\! \frac{c_{i-1}}{1-c_{i-1}^2} + \frac{Y_{\rm max}}{\lambda} \! \right) +  \frac{s}{\lambda} \bar \eta  + \frac{Y_{\rm max}}{\lambda}.
\end{align*}
Let $\ve \in (0,1)$ be the unique point such that $\frac{\tilde C}{\gamma \vp_0(0)} = \frac{\ve}{1-\ve^2}$. Then, we define
\[
    c_r :=\max\left\{\|e_r^0\|^2, \ve, q^2 \right\}^{1/2} < 1.
\]
We show that $\|e_r(t)\|\le c_r$ for all $t \in [0,t_1^-)$. %(or on any interval of existence, respectively).
Suppose there exists $t_1\in [0,t_1^-)$ such that $\|e_r(t_1)\| > c_r$ and define
\[
    t_0 := \max\setdef{t\in [0,t_1]}{ \|e_r(t)\| = c_r},
\]
which is well-defined since $\|e_r(0)\| \le c_r$. 
First observe that, by the same calculations as in the proof of~\cite[Cor.~1.10]{BergIlch21}, we have
for the auxiliary expression $\gamma_{r-1}(t) := \alpha(\|e_{r-1}(t)\|^2) e_{r-1}(t)$ that
\[
    \|\dot \gamma_{r-1}(t)\| \le \tilde \alpha(c_{r-1}^2) \left(\mu_{r-1} + \alpha(c_{r-1}^2) c_{r-1} \right).
\]
Furthermore, since $\|e_r(t)\|\ge c_r $ for all $t\in[t_0,t_1]$ we have $\alpha(\|e_r(t)\|^2) \ge 1/(1-c_r^2)$. 
Hence, we may calculate
\begin{align*}
  \tfrac12 \ddt \|e_r(t)\|^2 &= e_r(t)^\top \big( \dot \vp(t) e^{(r-1)}(t) \!+\! \vp(t) e^{(r)}(t) \!+\! \dot \gamma_{r-1}(t)\big)\\
  &\le \|e_r\| \left( \mu_0 \vp(t) \|e^{(r-1)}(t)\| \!+\! \tilde \alpha(c_{r-1}^2) \left(\mu_{r-1} + \frac{c_{r-1}}{1-c_{r-1}^2}\right)   \right.\\
  &\quad \left.+ \vp(t) Y_{\rm max} + \vp(t) \left(\sum_{i=1}^r \|R_i\| \|y^{(i-1)}(t)\| + s \bar \eta\right)\right)\\
   &\quad - \tfrac12 \vp(t) \alpha(\|e_r(t)\|^2) e_r(t)^\top (\Gamma+\Gamma^\top) e_r(t)\\
   &\le \|e_r\| \left( \mu_0 \vp(t) \|e^{(r-1)}(t)\| \!+\! \tilde \alpha(c_{r-1}^2) \left(\mu_{r-1} + \frac{c_{r-1}}{1-c_{r-1}^2}\right) \right.\\
  &\quad \left.+ \frac{Y_{\rm max}}{\lambda} + \sum_{i=1}^r \|R_i\| \left(1+ \frac{c_{i-1}}{1-c_{i-1}^2} + \frac{Y_{\rm max}}{\lambda}\right) + \frac{s}{\lambda} \bar \eta\right) - \frac{\gamma \vp(0)}{1-c_r^2} \|e_r(t)\|^2 \\
   &\le \left( \tilde C -  \gamma \vp(0) \frac{ c_r}{1-c_r^2}\right) \|e_r(t)\| \le 0,
\end{align*}
by which $c_r < \|e_r(t_1)\| \le \|e_r(t_0)\| = c_r$, a contradiction.
By~\eqref{eq:ei_t1+} we have that $\|e_r(t_k^+)\|\le q \le c_r$ for all~$k \in \N$ with $t_k^+ \in [0,\omega)$. Therefore, the arguments above can be reapplied on any interval $[t_k^+,t_{k+1}^-) \subseteq [0,\omega)$ to achive $\| e_r(t)\| \le c_r$ for all $t \in [t_k^+,t_{k+1}^-)$. Then, invoking $u|_{[t_{k}^-,t_k^+)} = 0$, it follows from~\eqref{def:control-scheme} that $\|u(t)\| \le c_r/(1-c_r^2)$ for all $t\in [0,\omega)$, thus $u \in \cL^\infty([0,\omega);\R^m)$. As a consequence, it follows from~\eqref{eq:System} that $y^{(r)}\in \cL^\infty([0,\omega);\R^m)$.

\emph{Step 4.}
We show that the solution is global. Suppose the opposite, i.e., $\omega < \infty$.
Then, since $\|\eta(t)\|\le \bar \eta$ and for all $i=1,\ldots,r$ we have $\|e_i(t)\|\le c_i<1$ for $t\in [t_k^+,t_{k+1}^-)$ by~\cite[Cor.~1.10]{BergIlch21} and \emph{Step~3}, and $\|e_i(t)\|\le q \le c_i$ for $t\in [t_k^-,t_k^+)$ by~\eqref{eq:ei_t1+} (note that it is straightforward to extend the estimate~\eqref{eq:e_t1+} to $t\in [t_k^-,t_k^+)$), it follows that the closure of the graph of the solution of~\eqref{eq:equiv-ivp} is a compact subset of $\cD\times \R^{n-rm}$, which contradicts the findings of \emph{Step 1}.
This yields assertion~\ref{ome_inf} and consequently assertions~\ref{error_funnel}~\&~\ref{bounded_u} follow.
This completes the proof.
\end{proof}

\end{document}